\newtheorem{thm}{Theorem}[section]
\newtheorem{lem}[thm]{Lemma}
\theoremstyle{definition}
\newtheorem{defn}[thm]{Definition}
\newtheorem{ex}[thm]{Example}
\theoremstyle{remark}
\newtheorem*{rem}{Remark}
\newenvironment{alphafootnotes}
  {\par\edef\savedfootnotenumber{\number\value{footnote}}
   
   \setcounter{footnote}{0}}
  {\par\setcounter{footnote}{\savedfootnotenumber}}
\newcommand{\PP}{\mathbb{P}}
\newcommand{\C}{\mathbb{C}}
\newcommand{\Z}{\mathbb{Z}}
\newcommand{\Q}{\mathbb{Q}}
\newcommand{\Cstar}{\C^{\ast}}
\newcommand{\Zfin}[1]{\Z/#1\Z}  
\newcommand{\To}{\longrightarrow}
\newcommand{\abs}[1]{\left\vert#1\right\vert}  
\newcommand{\set}[1]{\left\{#1\right\}}  
\newcommand{\inn}[1]{\left\langle#1\right\rangle}  
\newcommand{\klat}{L_{\text{K3}}}  
\newcommand{\mirror}[1]{{#1}^\vee}
\newcommand{\pp}[4]{[#1:#2:#3:#4]}  
\newcommand{\p}{^{\prime}}
\newcommand{\Gmax}[1]{G_{#1}} 
\newcommand{\J}[1]{j_{#1}} 
\newcommand{\SLn}[1]{\SL({#1}, \C)}  
\newcommand{\SLgp}[1]{{\SL}_{#1}}  
\newcommand{\triv}{\{0\}}  
\newcommand{\A}[1]{{\mathbf A}_{#1}}  
\DeclareMathOperator{\Hom}{Hom}  
\DeclareMathOperator{\Aut}{Aut}  
\DeclareMathOperator{\SL}{SL}   
\DeclareMathOperator{\rk}{rank}  
\def\imod#1{\allowbreak\mkern10mu({\operator@font mod}\,\,#1)}
\title{The mirror symmetry of K3 surfaces with non-symplectic automorphisms of prime order}
\author{Paola Comparin, Christopher Lyons, Nathan Priddis and Rachel Suggs}
\address{Paola Comparin, Laboratoire de Math\'ematiques et Applications, Universit\'e de Poitiers,
 T\'el\'eport 2, 
Boulevard Marie et Pierre Curie,
86962 Futuroscope Chasseneuil, France}
\email{paola.comparin@math.univ-poitiers.fr}
\address{Christopher Lyons, Department of Mathematics, University of Michigan, 530 Church Street, Ann Arbor, MI 48109, USA}
\email{lyonsc@umich.edu}
\address{Nathan Priddis, Department of Mathematics, University of Michigan, 530 Church Street, Ann Arbor, MI 48109, USA}
\email{priddisn@umich.edu}
\address{Rachel Suggs, Department of Mathematics, Brigham Young University, Provo, UT 84602, USA}
\email{rachel.suggs@byu.edu}
\begin{document}

\begin{abstract}
We consider K3 surfaces that possess a non--symplectic automorphism of prime order $p>2$ and we present, for these surfaces, a correspondence between the mirror symmetry of Berglund-H\"ubsch-Chiodo-Ruan and that for lattice polarized K3 surfaces presented by Dolgachev. 
\end{abstract}

\maketitle
\markboth{Paola Comparin, Christopher Lyons, Nathan Priddis and Rachel Suggs}{The mirror symmetry of K3 surfaces with non-symplectic automorphisms of prime order}

\section{Introduction}

Nearly twenty years ago, Berglund and H\"ubsch described a transposition rule for identifying mirror pairs of Calabi-Yau manifolds that are defined as hypersurfaces in weighted projective space \citep{berghub}. Subsequently, \citet{berghenn} and, independently,  \citet{krawitz} extended the rule to include a group of diagonal symmetries. More recently, \citet{BHCR} proved that this rule produces pairs of Calabi-Yau manifolds that are mirror to each other in the classical sense. When one has a Calabi-Yau manifold of dimension two, i.e., a K3 surface, a second type of mirror symmetry was summarized by Dolgachev in \citep{dolgachev}. In this paper we show that these two forms of mirror symmetry agree for a large class of K3 surfaces.
    
The extension of the Berglund-H\"ubsch transposition rule given by Berglund-Henningson and Krawitz is applicable to any quasihomogeneous invertible polynomial $W$ together with a group $G$ of diagonal automorphisms. Here, invertibility signifies that $W$ has the same number of monomials as variables. If the pair $(W,G)$ satisfies the Calabi-Yau condition and we let $Y_W=\set{W=0}$ in weighted projective space, then the orbifold $\left[Y_{W}/G \right]$ defines an (orbifold) K3 surface. The Berglund-H\"ubsch-Chiodo-Ruan (BHCR) mirror symmetry essentially says that for a well-defined polynomial $W^T$ and group $G^T$, the surfaces $[Y_{W}/G]$ and $[Y_{W^T}/G^T]$ are a mirror pair. 

The form of mirror symmetry discussed in \citep{dolgachev} applies to lattice polarized K3 surfaces, and is hereafter referred to as LPK3 mirror symmetry.  Beginning with a K3 surface $X$ polarized by a lattice $M$ (that satisfies some mild conditions), the construction produces a mirror family of dimension $20-\text{rank}(M)$ whose members are also lattice polarized K3 surfaces.  One may say that two lattice polarized K3 surfaces form an LPK3 mirror pair if each belongs to the mirror family of the other.  

If one starts only with an unpolarized K3 surface $X$, the standard choice is usually to polarize $X$ by the full Picard lattice. This is the approach taken by \citet{belcastro}. However, as can be seen from her work, polarizing the two members of a BHCR mirror pair by their Picard lattices does not, in general, produce an LPK3 mirror pair.  One hope for fixing this is to use smaller polarizing lattices, perhaps suggested by the particular forms of the surfaces.  

With this in mind, recall that a symplectic automorphism of a K3 surface is one that acts trivially on the canonical bundle. Several papers, including \citep{order_three}, \citep{other_primes}, \citep{nikulin_order2}, and \citep{Taki_order3}, classify all possible non-symplectic automorphisms of prime order $p$ based on their invariant lattice, a primitive sublattice of the Picard lattice. The connection between BHCR mirror symmetry and LPK3 mirror symmetry is made using the invariant lattice of a certain non-symplectic automorphism of prime order possessed by the K3 surfaces in question.
    
In \citep{involutions}, Artebani, Boissi\`ere, and Sarti use this idea for $p=2$ to prove that for a certain collection of K3 surfaces, a K3 surface and its BHCR mirror pair belong to LPK3 mirror families. The surfaces for which they proved this statement are those defined by equations of the form
\begin{equation*}
x_1^2 = f(x_2,x_3,x_4).
\end{equation*}
These surfaces possess the obvious involution $x_1 \mapsto -x_1$. This paper extends the Artebani-Boissi\`ere-Sarti result to surfaces of the form $x_1^p = f(x_2,x_3,x_4)$ for any prime $p$. Our main result is the following (see Sections \ref{backgrnd} and \ref{mirror} for notation): 

\begin{thm}\label{mainthm}
Consider the hypersurface $Y_{W}$ in a weighted projective space $\PP(w_1,w_2,w_3,w_4)$ given by a non--degenerate invertible polynomial of the form 
\begin{equation}\label{form_of_w}
x_1^p+f(x_2,x_3,x_4),
\end{equation}
where $p$ is prime, and let $G$ be any group of diagonal automorphisms of $W$ such that $J_W\subset G\subset SL_W$, where $J_W$ is the group generated by the exponential grading operator. Setting $\widetilde {G}:=G/J_W$ and $\widetilde{G^T}:=G^T/J_{W^T}$, the resolutions of the BHCR mirror orbifolds $[Y_W/\widetilde{G}]$ and $[Y_{W^T}/\widetilde{G^T}]$ belong to LPK3 mirror families.
\end{thm}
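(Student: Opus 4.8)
The plan is to realize both members of the BHCR mirror pair as lattice-polarized K3 surfaces, where the polarizing lattice is the invariant lattice of the natural non--symplectic automorphism of order $p$, and then to verify directly that these two invariant lattices satisfy Dolgachev's lattice-theoretic criterion for an LPK3 mirror pair. A polynomial of the form \eqref{form_of_w} carries the order-$p$ automorphism $\sigma_W\colon x_1\mapsto \zeta_p x_1$ (with $\zeta_p$ a primitive $p$-th root of unity), which commutes with the diagonal action, descends to the quotient, and lifts to a non--symplectic automorphism $\widetilde\sigma_W$ on the resolution $X_W$ of $[Y_W/\widetilde G]$; the analogous automorphism $\widetilde\sigma_{W^T}$ lives on the resolution $X_{W^T}$ of the mirror. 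Writing $S_W := H^2(X_W,\Z)^{\widetilde\sigma_W}$ for its invariant lattice and $S_{W^T}$ for the corresponding lattice of the mirror, the goal becomes the isometry
\[
  (S_W)^{\perp}_{\klat} \;\cong\; U \oplus S_{W^T},
\]
which is exactly Dolgachev's condition guaranteeing that the $S_W$-polarized and $S_{W^T}$-polarized families are mirror to one another.

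First I would establish that $X_W$ is a K3 surface and that $\widetilde\sigma_W$ is non--symplectic of order $p$, so that $S_W$ is a primitive sublattice of $\Pic(X_W)$ of signature $(1,\rk S_W -1)$ while $(S_W)^\perp$ contains the transcendental lattice. This places us in the setting of the classification of non--symplectic automorphisms of prime order cited above, in which each such automorphism is determined up to deformation by the isometry class of its invariant lattice, and that class is in turn pinned down by a short list of numerical invariants: the rank, the structure of the discriminant group $(\Z/p)^{a}$, and, when $p=2$, the Nikulin parity $\delta$. Next I would compute these invariants from the combinatorial data $(W,G)$: the fixed locus of $\widetilde\sigma_W$ decomposes into the curve cut out by $f(x_2,x_3,x_4)=0$ together with finitely many isolated fixed points arising from the quotient and its resolution, and the holomorphic and topological Lefschetz fixed-point formulas convert the genus of this curve and the number of fixed points into $\rk S_W$ and the discriminant data.

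The crucial point is then to show that the Berglund-H\"ubsch transposition $W\mapsto W^T$, $G\mapsto G^T$ sends these invariants to the \emph{complementary} ones: the ranks satisfy $\rk S_W + \rk S_{W^T}=20$, and the discriminant form of $S_{W^T}$ agrees with that of $S_W$ after the sign reversal forced by passing to the orthogonal complement inside the unimodular lattice $\klat$ (so that $A_{(S_W)^\perp}\cong A_{S_W}(-1)\cong A_{S_{W^T}}$). Granting this, together with the fact that $(S_W)^\perp$ splits off a hyperbolic plane $U$, Nikulin's existence and uniqueness theorems for even lattices of prescribed signature and discriminant form upgrade the equality of numerical invariants to the desired isometry $(S_W)^{\perp}_{\klat}\cong U\oplus S_{W^T}$, completing the proof.

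The main obstacle is this middle step: proving that transposition exactly implements the passage to the orthogonal complement at the level of invariant lattices, rather than merely matching ranks. The rank identity $\rk S_W+\rk S_{W^T}=20$ should follow from the dimension count governing the BHCR mirror (the state-space and Hodge-number duality of Chiodo-Ruan) combined with the interpretation of $\rk S_W$ via the $\sigma$-fixed Hodge pieces. The delicate part is controlling the discriminant form, where one must track how the weights $(w_1,w_2,w_3,w_4)$ and the group $G$ transform under transposition and verify that the induced change on the discriminant data is precisely the one produced by the orthogonal-complement construction. Rather than computing this abstractly in full generality, I expect the cleanest route is to pair the combinatorial computation of $(\rk S_W, a)$ with the explicit tables of admissible invariant lattices supplied by the classification papers, checking that $(W,G)$ and its transpose always land on a mutually mirror pair of table entries; the special feature $x_1^p+f$ of \eqref{form_of_w}, which renders the $\sigma$-action and its fixed locus especially transparent, is exactly what makes this matching tractable.
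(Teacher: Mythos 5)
Your proposal follows essentially the same route as the paper: polarize each member of the BHCR pair by the invariant lattice of its order-$p$ non-symplectic automorphism, reduce Dolgachev's mirror condition to the statement that the invariants $(r,a)$ of $S(\sigma_p)$ and $S(\sigma_p^T)$ are $(r,a)$ and $(20-r,a)$ (using that for odd $p$ these invariants determine the $p$-elementary lattice and that $T(\sigma_p)$ splits off a copy of $U$), compute the invariants from the topology of the fixed locus via the classification of prime-order non-symplectic automorphisms, and verify the complementarity by an exhaustive case-by-case check against the tables. This is precisely the paper's argument, down to the final retreat from a conceptual derivation of the rank identity to a finite enumeration of all admissible pairs $(W,G)$ and their transposes.
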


This result is the starting point for a larger program involving the Gromov-Witten theory of the quotient of a K3 surface by a non-symplectic automorphism, the monodromy group of the period domain of K3 surfaces and the construction automorphic forms.

By \citep{nikulin_finite}, any non-symplectic automorphism of a K3 surface of prime order $p$ will satisfy $p\leq 19$. A detailed analysis shows that no equation of the form of (\ref{form_of_w}) with $p=11,17$, or $19$ defines a minimal K3 surface. Since \citep{involutions} establishes Theorem \ref{mainthm} in the case $p=2$, in this paper we will only consider $p=3$, 5, 7, and 13.

Our strategy in proving this theorem is to first list all possible polynomials of the form (\ref{form_of_w}) that define K3 surfaces. We do this using a classification theorem due to Kreuzer and Skarke \citep{KrSk}, \citep{skarke} of invertible polynomials based on so-called atomic types. After finding all possible groups $G$ of automorphisms of $W$ that satisfy the hypotheses of BHCR symmetry, we then prove the theorem using the aforementioned results on non-symplectic automorphisms of prime order of K3 surfaces.

Finally, we mention that some related work has been done by Ebeling in \citep{ebeling}. In particular, in section 3, Ebeling shows that many of the LPK3 mirror pairs identified by \citet{belcastro} are also BHCR dual, provided they are of the form (\ref{form_of_w}) (with $p$ not necessarily prime) and that $\SL_W=J_W$. Some of the polynomials in Ebeling's work also show up here (precisely when $p$ is prime, and $S(\sigma_p)$ is equal to the Picard lattice). 

The structure of this paper is as follows. In Section \ref{backgrnd} we give some background and definitions necessary for understanding BHCR mirror symmetry, non-symplectic automorphisms of K3 surfaces, and lattice theory. In Section \ref{mirror} we explain in more detail two kinds of mirror symmetry---BHCR and LPK3---and we restate our main result concerning their compatibility. In section \ref{examples} we develop our method of proof by working all the computations needed to prove the result for several K3 surfaces. Finally in Section \ref{tables} we give several tables of calculations, which verify the theorem. In the Appendix we have reproduced several tables from \citep{other_primes} which are relevant for finding the invariant lattice of non-symplectic automorphisms of prime order. 

The results presented here were achieved simultaneously and independently by the first author and by collaboration of the latter three authors.  This joint paper emerged after each learned of the other's work.

\subsection{Acknowledgements}

We would like to thank Igor Dolgachev, Yongbin Ruan and Alessandra Sarti for many useful discussions and helpful insights. We would also like to thank Tyler Jarvis' students at BYU for providing a useful list of invertible quasihomogeneous polynomials, and for the use of their computer code for assisting in calculations. Finally, we thank Wolfgang Ebeling for pointing out to us the work \citep{berghenn}.  C.L.\ was partially supported by NSF RTG grant DMS 0943832. N.P.\ was supported by NSF RTG grant DMS 1045119. R.S.\ became involved as part of the REU program at the University of Michigan, supported by NSF RTG grant 053279.

\section{Background Material}\label{backgrnd}

In this section we present some background necessary to understand BHCR and LPK3 mirror symmetry.

\subsection{Quasihomogeneous polynomials and diagonal symmetries}\label{quasi_sec}
A map $W:\C^n\to \C$ is \emph{quasihomogeneous} of degree $d$ with integer weights $w_1, w_2, \dots, w_n$ if for every $\lambda \in \C$,
\[
W(\lambda^{w_1}x_1, \lambda^{w_2}x_2, \dots, \lambda^{w_n}x_n) = \lambda^dW(x_1,x_2, \dots, x_n).
\]
By rescaling the numbers $w_1, \dots, w_n$ and $d$, we can require that $\gcd(w_1, w_2, \dots, w_n)=1$. In this case, we say $W$ has the \emph{weight system} $(w_1, w_2, \ldots, w_n; d)$. 

A quasihomogeneous polynomial $W:\C^n  \rightarrow \C$ (sometimes called a potential in the physics literature) with a critical point at the origin is \emph{non-degenerate} if (i) the origin is the only critical point of $W$, and (ii) the fractional weights $\frac{w_1}{d}, \ldots, \frac{w_n}{d}$ of $W$ are uniquely determined by $W$. 
We say a non-degenerate quasihomogeneous polynomial is \emph{invertible} if it has the same number of monomials as variables.

If $W$ is invertible we can rescale variables so that $W = \sum_{i=1}^n \prod_{j=1}^n x_j^{a_{ij}}$. This polynomial can be conveniently represented by the square matrix $\A{W} = (a_{ij})$, which we will call the \emph{exponent matrix} of the polynomial. The conditions we have imposed on $W$ ensure that $\A{W}$ is an invertible matrix. In fact, the fractional weight $\frac{w_i}{d}$ is equal to the sum of the $i$-th row of $\A{W}^{-1}$.

\begin{thm}{\rm(cf. \citet[Theorem 1]{KrSk})} \label{atomic}
A quasi--homogeneous polynomial $W$ is non--degenerate and invertible if and only if it can be written as a direct sum of the three \emph{atomic types}:
\begin{itemize}
\item $W_{fermat} = x^a$
\item $W_{loop} = x_1^{a_1}x_2 + x_2^{a_2}x_3 + \ldots + x_n^{a_n}x_1$
\item $W_{chain} = x_1^{a_1}x_2 + x_2^{a_2}x_3 + \ldots + x_{n-1}^{a_{n-1}}x_n + x_n^{a_n}.$
\end{itemize}
Here the exponents are all integers greater than one.
\end{thm}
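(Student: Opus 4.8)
The plan is to prove the theorem by establishing both implications of the biconditional, treating the three atomic types as the irreducible building blocks from which all non-degenerate invertible polynomials are assembled. I would first make precise the notion of \emph{direct sum}: if $W_1(x_1,\dots,x_k)$ and $W_2(y_1,\dots,y_m)$ are polynomials in disjoint sets of variables, their direct sum is $W_1 + W_2$, and on the level of exponent matrices this corresponds to the block-diagonal matrix $\A{W_1}\oplus\A{W_2}$. The easier direction is sufficiency: given that each atomic type is itself non-degenerate and invertible, I would verify that a direct sum of non-degenerate invertible polynomials is again non-degenerate and invertible. Invertibility (equal number of monomials and variables) is immediate from the block structure, as is the invertibility of the block-diagonal exponent matrix. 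Non-degeneracy requires checking that the origin remains the only critical point, which follows because $\nabla(W_1+W_2)=0$ decouples into $\nabla W_1=0$ and $\nabla W_2 = 0$ in the separate variable groups, and checking that the fractional weights are uniquely determined, which follows from the corresponding block structure of $\A{W}^{-1}$.

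The substantial direction is necessity: showing that \emph{every} non-degenerate invertible polynomial decomposes as such a direct sum. Here I would first exploit invertibility to write $W = \sum_{i=1}^n \prod_{j=1}^n x_j^{a_{ij}}$ as in the excerpt, so that the combinatorial data is entirely encoded in the exponent matrix $\A{W}=(a_{ij})$. The strategy is to read off the structure from the incidence pattern of $\A{W}$: build a directed graph on the variables where one draws an edge from $x_i$ to $x_j$ whenever $x_j$ appears in the $i$-th monomial with a positive exponent, and argue that non-degeneracy forces each connected component of this graph to have a very restricted shape---namely exactly that of a Fermat vertex (a single loop at a variable), a loop (a directed cycle), or a chain. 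The direct sum decomposition then corresponds to the decomposition of the graph into connected components.

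The main obstacle, and where the real content of the argument lies, is the combinatorial classification of which incidence patterns are compatible with non-degeneracy. The key constraints I would extract are: (i) since the origin must be the only critical point, no variable can be \emph{missing} from every monomial's gradient in a way that creates a non-isolated critical locus, which bounds how sparse each row and column of $\A{W}$ can be; and (ii) the requirement that the fractional weights $w_i/d$ be positive and uniquely determined forces $\A{W}$ to be invertible over $\Q$ with controlled sign structure. The hard part will be showing that these conditions rule out every incidence pattern except the three atomic ones---for instance, that no variable can appear in the pure-power-times-another-variable pattern branching into two successors, and that the only way to close up a component without introducing a non-isolated singularity is via a single cycle (loop) or a chain terminating in a Fermat-type pure power. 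I expect this to reduce to a careful case analysis on the in-degrees and out-degrees of vertices in the incidence graph, together with a Jacobian-criterion computation verifying that any forbidden configuration yields a critical point away from the origin. Once the graph-theoretic normal form is established, relabeling the variables within each component puts $W$ into the stated direct-sum form, and the positivity of all exponents (each greater than one) follows from non-degeneracy together with the Calabi--Yau-type weight constraints.
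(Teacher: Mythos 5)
The paper does not prove this statement: it is quoted verbatim from Kreuzer--Skarke \citep{KrSk}, so there is no internal proof to compare against, and your proposal should be judged as a reconstruction of their argument. Your outline does follow the strategy of the original reference --- encode $W$ in its exponent matrix, attach a directed graph to the incidence pattern, and show that non-degeneracy plus invertibility forces each component to be a Fermat vertex, a cycle, or a chain. The sufficiency direction as you describe it (block-diagonal exponent matrices, decoupled gradients, block structure of $\A{W}^{-1}$) is fine, modulo actually verifying that each atomic type with all exponents at least $2$ has an isolated critical point at the origin; for the loop this requires a short argument (the case where no coordinate vanishes leads to $\prod a_i = (-1)^n$, and the case where some coordinate vanishes propagates around the cycle), not just an assertion.

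The necessity direction, however, is a plan rather than a proof, and the entire content of the theorem sits in the part you defer to ``a careful case analysis.'' The step you need to isolate is precisely Kreuzer--Skarke's Theorem 1: non-degeneracy alone forces, for every variable $x_i$, the existence in $W$ of a monomial of the form $x_i^{a}$ or $x_i^{a}x_j$ with $j\neq i$. Invertibility (exactly $n$ monomials for $n$ variables) then makes the assignment $x_i \mapsto (\text{such a monomial})$ a bijection, so \emph{every} monomial has one of these two shapes and your graph automatically has out-degree at most $1$; a further non-degeneracy check (e.g.\ $x_1^a x_3 + x_2^b x_3 + x_3^c$ has a positive-dimensional critical locus inside $\{x_3=0\}$) rules out in-degree at least $2$, and only then do the components decompose into chains and cycles. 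Without this lemma, the incidence graph you define can a priori contain monomials involving three or more variables, and the case analysis you gesture at has no bounded list of cases. Finally, your closing claim that the exponents being greater than one ``follows from non-degeneracy together with the Calabi--Yau-type weight constraints'' is wrong: the Calabi--Yau condition is a hypothesis on the pair $(W,G)$ introduced later in the paper and plays no role in this classification theorem, which concerns $W$ alone; the exponent bound must be extracted from non-degeneracy itself (the isolated critical point at the origin and the uniqueness of the fractional weights).
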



We will be interested in certain symmetries of the invertible polynomial $W$, which we call the \emph{group of diagonal symmetries} of $W$, denoted $G_W$. Precisely,
\begin{equation*}
\Gmax{W} = \{(c_1, c_2, \ldots, c_n) \in (\C^*)^n:  W(c_1x_1, c_2x_2, \ldots, c_nx_n) = W(x_1, x_2, \ldots, x_n)\}.
\end{equation*}

For $\gamma=(c_1, c_2, \ldots, c_n)\in \Gmax{W}$, the coordinates $c_1, c_2, \ldots, c_n$ must be roots of unity. This enables us to write the group $\Gmax{W}$ additively as a subgroup of $(\Q / \Z)^n$, with the identification 
\[
(c_1, c_2, \ldots, c_n) = (e^{2 \pi i g_1}, e^{2 \pi i g_2}, \ldots, e^{2 \pi i g_n}) \leftrightarrow (g_1, g_2, \ldots, g_n).
\]
In fact, if $\gamma=(g_1, g_2, \ldots, g_n)$ is an element of $\Gmax{W}$ written additively, the coordinates $g_j$ must satisfy $\sum_j a_{ij}g_j \in \Z$ for $1\leq i\leq n$. This is equivalent to the condition that  $\gamma$ is in the $\Z$-span of the columns of $\A{W}^{-1}$. Thus $\Gmax{W}$ is generated by the columns of $\A{W}^{-1}$. From this fact, one can check that the order of $\Gmax{W}$ is 
\[
|\Gmax{W}| = \det(\A{W}).
\]

As $W$ is quasihomogeneous, $\Gmax{W}$ always contains the \emph{exponential grading operator} $\J{W} = (e^{2 \pi i \frac{w_1}{d}}, e^{2 \pi i \frac{w_2}{d}}, \ldots, e^{2 \pi i \frac{w_n}{d}})$ (in additive notation $\J{W} = \left(\frac{w_1}{d}, \frac{w_2}{d}, \ldots, \frac{w_n}{d}\right)$). The group generated by $\J W$ is $J_W=\inn{j_W}$ and it is a cyclic group of order $d$. 

Finally, if we think of $\Gmax{W}$ multiplicatively, we can identify an element $\gamma =(c_1, c_2, \ldots, c_n)\in \Gmax{W}$ with a diagonal matrix that has the $c_i$ on the diagonal. If this matrix is contained in the group $\SLn{n}$, we abuse notation and write $\gamma\in\SLn{n}$. In additive notation (with $c_j = e^{2 \pi i g_j}$) this is equivalent to $\sum_j g_j \in \Z$. We define the group $\SLgp{W}$ to be $\Gmax{W}\cap \SLn{n}$.

We will be interested in pairs $(W,G)$, consisting of an invertible polynomial $W$ and a group of diagonal symmetries $G\subset G_W$, that will give rise to K3 surfaces. Toward this end, we say that the pair $(W, G)$ satisfies the \emph{Calabi-Yau} condition if both of the following hold:
\begin{itemize}
\item[(i)] $\displaystyle \sum_{i=1}^n w_i=d$,
\item[(ii)] $J_W\subset G\subset SL_W$.
\end{itemize}
Note that (i) ensures that $J_W\subset \SLgp{W}$. 

The following section will explain how the objects described in this section give rise to K3 surfaces.

\subsection{K3 Surfaces}\label{K3_sec}

Recall that a \emph{K3 surface} is a compact complex surface $X$ with trivial canonical bundle and $\dim H^1(X,\mathcal O_X)=0$. All K3 surfaces considered here will be projective and minimal. An automorphism of $X$ is \emph{symplectic} if the induced action on its canonical bundle is trivial, and is \emph{non--symplectic} otherwise.

In this paper, we will be concerned with a class of K3 surfaces that arises from certain hypersurfaces in weighted projective space. We start with the weighted projective space $\PP(w_1,w_2,w_3,w_4)$, assuming without loss of generality that it is \emph{normalized}, in the sense that $\gcd(w_1,w_2,w_3,w_4)=1$. A non-degenerate quasihomogeneous polynomial $W\in \C[x_1,x_2,x_3,x_4]$ with weight system $(w_1,w_2,w_3,w_4;d)$ defines a hypersurface $Y_W=\set{W=0}$ of degree $d$ in $\PP(w_1, w_2, w_3, w_4)$. The nondegeneracy of $W$ implies that the hypersurface $Y_W$ is \emph{quasismooth}.

In independent work, Reid (unpublished) and \citet{yonemura} have compiled a list of the 95 normalized weight systems $(w_1,w_2,w_3,w_4;d)$ (``the 95 families'') such that $\PP(w_1,w_2,w_3,w_4)$ admits quasismooth hypersurfaces of degree $d$ whose minimal resolutions are K3 surfaces. In all cases one has $d=\sum_i w_i$. By \citep[Theorem 1.13]{hypergeometric}, the general hypersurface of degree $d$ in one of these 95 weight systems is also \emph{well-formed}, meaning that it does not contain any of the coordinate lines $\set{x_i=x_j=0}$. Moreover, any quasismooth well-formed hypersurface in the normalized weighted projective space $\PP(w_1,w_2,w_3,w_4)$ of degree $d=\sum_i w_i$ has a K3 surface as its minimal resolution by \citep[Lemma 1.12]{hypergeometric}. Thus, if $W$ is an invertible polynomial with weight system $(w_1,w_2,w_3,w_4;d)$ belonging to one of the 95 families, then the minimal resolution $X_W$ of $Y_W$ is a K3 surface.

The group $\Gmax{W}$ acts in an obvious manner by automorphisms on the surface $Y_W$, and by extension on the K3 surface $X_W$. By \citep[Proposition 1]{involutions}, an automorphism $\sigma\in \Gmax{W}$ is symplectic if and only if $\det \sigma =1$, that is, if and only if $\sigma\in \SLgp{W}$.

We now add the condition that $W$ be of the form (\ref{form_of_w}) for a prime $p$. In this case, $X_W$ has an obvious non-symplectic automorphism of order $p$, arising from the map
\[
\sigma_p:\pp{x_1}{x_2}{x_3}{x_4}\mapsto \pp{\zeta_px_1}{x_2}{x_3}{x_4}. 
\]
We can identify this map as multiplication by an element of $\Gmax{W}$, which we will also denote as $\sigma_p$. In additive notation this is $\sigma_p=\left(\tfrac 1p,0,0,0\right)$.

We may further obtain K3 surfaces from $X_W$ by taking quotients.  Indeed, if $J_{W} \subset G\subset \SLgp{W}$, then $\widetilde{G}=G/J_{W}$ acts symplectically on the K3 surface $X_W$, and we may consider the quotient $X_W/\widetilde{G}$.  The minimal resolution of $X_W/\widetilde{G}$, which we will denote as $X_{W,G}$, is also a K3 surface.  The non--symplectic automorphism $\sigma_p$ descends to $X_W/\widetilde{G}$, and thus one obtains a non-symplectic automorphism of order $p$ on $X_{W,G}$; we will also denote the automorphism on $X_{W,G}$ as $\sigma_p$ when there is no risk of confusion.  We remark that, as $J_W$ acts trivially on $X_W$, we have $X_W=X_{W,J_W}$.

In the sequel, we will be concerned exclusively with K3 surfaces arising from the construction above, and for brevity we will summarize it by introducing the following terminology. Let $W$ be an invertible polynomial with weight system $(w_1,w_2,w_3,w_4;d)$ and let $ G \subset \Gmax{W}$. Suppose that $(W, G)$ satisfies the Calabi-Yau condition (see (i) and (ii) in Section \ref{quasi_sec}), and that the following additional conditions hold:
\begin{itemize}
\item[(iii)] $W$ is of the form (\ref{form_of_w}), and
\item[(iv)] the weight system $(w_1,w_2,w_3,w_4;d)$ belongs to one of the 95 families,
\end{itemize}
Then as in the discussion above we have a K3 surface $X_{W,G}$ that is the minimal resolution of $X_W/\widetilde{G}$. We will call $X_{W, G}$ a \emph{$p$-cyclic K3 surface} because it is equipped with the non--symplectic automorphism $\sigma_p$ coming from the diagonal symmetry $\left(\frac 1p, 0,0,0\right) \in \Gmax{W}$.

Next we describe restrictions on the prime exponent $p$ in (\ref{form_of_w}). As noted in the introduction, for general reasons one must have $p\leq 19$. But in fact the specific nature of (\ref{form_of_w}) further limits the possibilities for $p$. Indeed, among the 95 families, one may show by direct methods (see Example \ref{weight_ex}) that only 69 of these weight systems admit quasihomogenous invertible polynomials of the form (\ref{form_of_w}), and only with the primes $p=2,3,5,7,13$.  If we further limit ourselves to $p\neq 2$ (as will be our focus), then the number of weight systems is reduced to 41.  All possibilities for $W$ that may occur for $p\neq 2$ are enumerated in terms of the exponent $p$ in Tables \ref{tab_eq3} -- \ref{tab_eq13}. As can be seen, many of these weight systems admit multiple polynomials and some of the polynomials appear in more than one table.

\begin{ex}\label{weight_ex}  Let us show that the weight system $(w_1,w_2,w_3,w_4;d)=(5,4,3,3;15)$ appears among the 41 admissible families. 
We look for an invertible polynomial $W$ of the form \eqref{form_of_w} whose weight system is $(5,4,3,3;15)$.
If $W$ is as in \eqref{form_of_w}, then $pw_1=d$ holds. 
If $w_1$ does not satisfy $pw_1=d$ we can perform a change of coordinates $x_1\leftrightarrow x_j$ such that $pw_j=d$.
In this case the form of $W$ will contain the term $x_j^p$. For this weight system, there are two possible choices for $p$, i.e. $p=3,5$.

We start by considering $p=3$. Thus $W=x_1^3+f(x_2,x_3,x_4)$ and admits the automorphism $\sigma_3=(\frac13,0,0,0)$. By Theorem \ref{atomic}, the polynomial $W$ is a sum of atomic types and thus $f$ is one of the following:
\[f(x_2,x_3,x_4) =  \begin{cases}
 x_2^a+x_3^b+x_4^c& \text{(fermat)}  \\
 x_2^ax_3+x_3^bx_4+x_4^c & \text{(chain) }  \\
 x_2^ax_3+x_3^bx_4+x_4^cx_2& \text{(loop)}  \\
 x_2^ax_3+x_3^b+x_4^c& \text{(chain+fermat)} \\
x_2^ax_3+x_3^bx_2+x_4^c& \text{(loop+fermat)} \\
\end{cases} \]
for certain non-zero $a,b,c\in\mathbb N$.
Determining whether there is a polynomial $f$ of each of these types reduces to solving a linear system. There are two possibilities: $W_1=x_1^3+x_2^3x_3+x_3^4x_4+x_4^5$ and $W_2=x_1^3+x_2^3x_3+x_3^5+x_4^5$.

When we pass to $p=5$, we find that the weight $w_j$ associated to the term $x_j^5$ must be 3. 
Hence we choose $x_1\leftrightarrow x_4$ and obtain the polynomial $x_1^3+x_2^3x_3+x_3^5+x_4^5$, which is $W_2$ from above. Here the diagonal symmetry of order 5 is $(0,0,0,\frac15)$.
\end{ex}

In the sequel we will make a change of notation from $(x_1,x_2,x_3,x_4)$ to $(x,y,z,w)$ for the variables of $W$, with the weights in nonincreasing order from left to right.  This convention is also used Tables \ref{tab_eq3} -- \ref{tab_eq13}.

\subsection{Lattice theory}\label{lattice_sec}

By a \emph{lattice}, we shall mean a free abelian group $L$ of finite rank equipped with a non-degenerate symmetric bilinear form $B\colon L \times L \to \Z$. A lattice $L$ is \emph{even} if the associated quadratic form $B(x,x)$ takes values in $2\Z$. The \emph{signature} of $L$ is the signature $(t_+,t_-)$ of $B$. A lattice is \emph{hyperbolic} if its signature is $(1,\rk(L)-1)$. A sublattice $L'\subseteq L$ is called \emph{primitive} if $L/L'$ is free.

As the bilinear form $B$ is non-degenerate, it induces an embedding $L \hookrightarrow L^\ast$, where $L^\ast= \Hom(L,\Z)$.  The \emph{discriminant group} $A_{L}= L^\ast/L$ is finite. The minimal number of generators of $A_L$ is called the \emph{length} of $L$. Note that if one writes $B$ as a symmetric matrix in terms of a basis of $L$, then the order of $A_{L}$ is equal to $\abs{\det(B)}$. If $A_L = \triv$, $L$ is called \emph{unimodular}.  For a prime number $p$, $L$ is called \emph{$p$-elementary} if $A_L \simeq (\Z/p\Z)^a$ for some $a$; in this case, $a$ is the length of $A_L$. 

\citet{rudakov} have given a complete classification of even, indefinite, $p$-elementary lattices. It states in particular that for $p\neq 2$, an even indefinite $p$-elementary lattice of rank $r\geq 2$ is uniquely determined by the integer $a$. In the case $p=2$ a third invariant $\delta\in\{0,1\}$ is necessary to identify the lattice, though this will not be relevant in this paper.

We follow the notation of \citep[Section 1]{other_primes} to present some particular lattices that are relevant to our calculations.  The lattice $U$ is the unimodular hyperbolic lattice of rank 2. For a properly chosen basis the bilinear form is given by the matrix
\[
\left(\begin{matrix}
0 & 1\\
1 & 0
\end{matrix}\right).
\]

The lattice $A_{n}$ is the negative definite even lattice whose discriminant group has order $n+1$.  It is associated to the Dynkin diagram $A_n, n\geq 1$. In particular, if $p$ is prime, then $A_{p-1}$ is $p$-elementary with $a=1$.  Likewise, $E_8$ is the negative definite unimodular even lattice of dimension 8, and is associated to the Dynkin diagram $E_8$.

If $p\equiv3\imod 4$, we denote by $K_p$ the lattice corresponding to the matrix
\[
\left(\begin{matrix}
-(p+1)/2&1\\
1&-2
\end{matrix}\right).
\]
It is a negative-definite, $p$-elementary lattice with $a=1$.

If $p\equiv 1\imod 4$  the lattice $H_p$ corresponding to the matrix
\[
\left(\begin{matrix}
(p-1)/2&1\\
1&-2
\end{matrix}\right)
\]
is a hyperbolic, $p$-elementary lattice of length 1.

Given a lattice $L$, we denote by $L(n)$ the lattice obtained by multiplying the bilinear form by $n$.

Let $X$ be a K3 surface. We have the cup product form
\[
H^2(X,\Z) \times H^2(X,\Z) \stackrel{\cup}{\To} H^4(X,\Z) \  \tilde{\To} \ \Z,
\]
where the isomorphism on the right sends the class of a point to $1\in\Z$. This pairing makes $H^2(X,\Z)$ into an even unimodular lattice of signature $(3,19)$.  As such, it is isometric to the \emph{K3-lattice}
\[
\klat = U^3\oplus (E_8)^2.
\]
Here and in the sequel, direct sum of lattices is used to denote orthogonality. 

We let
\[
S_X = H^2(X,\Z) \cap H^{1,1}(X,\C)
\]
denote the Picard lattice of $X$ in $H^2(X,\Z)$ and $T_X = S_X^\perp$ denote the transcendental lattice.

Any non-symplectic automorphism $\sigma\in\Aut(X)$ induces an isometry $\sigma^\ast\in\Aut(H^2(X,\Z))$.  We let $S(\sigma)\subseteq H^2(X,\Z)$ denote the $\sigma^\ast$-invariant sublattice of $H^2(X,\Z)$, which one can check is a primitive sublattice of $H^2(X,\Z)$. We let $T(\sigma) = S(\sigma)^\perp$ denote its orthogonal complement, which is also primitive.  Note that, as $\sigma$ is non-symplectic, the Lefschetz $(1,1)$ Theorem implies that
\[
S(\sigma) \subset S_X.
\]
Finally, as the sum over the $\inn{\sigma}$-orbit of an ample divisor class is a $\sigma$-invariant ample class by Nakai-Moishezon, the Hodge Index Theorem implies that the signature of $S(\sigma)$ is $(1,t)$ for some $t\leq 19$, i.e. $S(\sigma)$ is hyperbolic.

\begin{lem}{\rm{(\citet{nikulin_finite}, \citet[Theorem 2.1]{other_primes})}} Let $X$ be a K3 surface with a non--symplectic automorphism $\sigma$ of prime order $p$. Then $S(\sigma)$ and $T(\sigma)$ are $p$-elementary lattices, and $A_{S(\sigma)}\cong A_{T(\sigma)}\cong (\Z/p\Z)^a$ with $a\leq \frac{\rk(T(\sigma))}{p-1}.$
\end{lem}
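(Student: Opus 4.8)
The plan is to exploit the action of $\sigma^\ast$ on $T(\sigma)$ and to recognize $T(\sigma)$ as a module over the ring of integers $O:=\Z[\zeta_p]$ of the $p$-th cyclotomic field. First I would decompose $H^2(X,\Z)\otimes\Q$ into the fixed space $V_1=\ker(\sigma^\ast-1)$ and its $B$-orthogonal complement; that these two summands really are orthogonal follows from a one-line computation using that $\sigma^\ast$ is an isometry, so that $S(\sigma)\otimes\Q=V_1$ while $T(\sigma)\otimes\Q$ is the complement, on which $\sigma^\ast$ has no nonzero fixed vector. Since $\sigma^\ast$ has prime order $p$, its minimal polynomial on $T(\sigma)$ divides $x^p-1=(x-1)\Phi_p(x)$ but is coprime to $x-1$, hence divides the irreducible cyclotomic polynomial $\Phi_p(x)$; thus $\Phi_p(\sigma^\ast)=0$ on the torsion-free group $T(\sigma)$. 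This makes $T(\sigma)$ a module over $\Z[\sigma^\ast]\cong\Z[x]/(\Phi_p)=O$, and since $O$ has $\Z$-rank $p-1$ it follows that $(p-1)\mid \rk(T(\sigma))$; write $\rk(T(\sigma))=m(p-1)$ with $m=\rk(T(\sigma))/(p-1)$.

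Next I would pin down the discriminant groups. As $\klat$ is unimodular and $S(\sigma)$ is primitive, Nikulin's theory of discriminant forms provides a canonical isomorphism $A_{S(\sigma)}\cong A_{T(\sigma)}$, induced by the projections of $H^2(X,\Z)/\bigl(S(\sigma)\oplus T(\sigma)\bigr)$; because $\sigma^\ast$ preserves all three lattices, this isomorphism is $\sigma^\ast$-equivariant. Since $\sigma^\ast$ acts as the identity on $S(\sigma)$, it acts trivially on $A_{S(\sigma)}$ and hence on $A_{T(\sigma)}$. Writing $\pi=\zeta_p-1$ and noting that the $\sigma^\ast$-invariance of $B$ makes the dual $T(\sigma)^\ast$ an $O$-module, triviality of the action on $A_{T(\sigma)}=T(\sigma)^\ast/T(\sigma)$ says exactly that $\pi\,T(\sigma)^\ast\subseteq T(\sigma)$. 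Thus $A_{T(\sigma)}$ is annihilated by $\pi$, hence is a module over $O/(\pi)\cong\Z/p\Z$; as $p=\pi^{\,p-1}\cdot(\text{unit})$ in $O$, it is also annihilated by $p$. Therefore $A_{T(\sigma)}\cong(\Z/p\Z)^a$ for some $a$, so $T(\sigma)$---and, via the isomorphism, $S(\sigma)$---is $p$-elementary.

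Finally, for the bound $a\le m$ I would use the $O$-module structure decisively. The dual $T(\sigma)^\ast$ is a finitely generated torsion-free module over the Dedekind domain $O$, hence projective, of $O$-rank $m$; consequently $T(\sigma)^\ast/\pi\,T(\sigma)^\ast$ is free of rank $m$ over $O/(\pi)\cong\Z/p\Z$. Since $\pi\,T(\sigma)^\ast\subseteq T(\sigma)\subseteq T(\sigma)^\ast$, the group $A_{T(\sigma)}=T(\sigma)^\ast/T(\sigma)$ is a quotient of $T(\sigma)^\ast/\pi\,T(\sigma)^\ast$, whence $a=\dim_{\Z/p\Z}A_{T(\sigma)}\le m=\rk(T(\sigma))/(p-1)$. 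I expect this last step to be the main obstacle: the naive approach of reducing the nilpotent operator $\sigma^\ast-1$ modulo $p$ only bounds $a$ by the number of its Jordan blocks, which is too weak to produce the factor $p-1$. The essential input is the total ramification of $p$ in $O=\Z[\zeta_p]$, i.e.\ that $O/(\pi)\cong\Z/p\Z$ while $O$ has $\Z$-rank $p-1$; this is precisely what converts the rank count over $O$ into the denominator $p-1$.
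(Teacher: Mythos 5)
The paper does not prove this lemma---it is quoted verbatim from the cited sources (Nikulin and \citet[Theorem 2.1]{other_primes})---so there is no in-paper argument to compare against. Your proof is correct and is essentially the standard argument from those references: the orthogonal splitting of $H^2(X,\Q)$ into the fixed part and its complement, the $\Z[\zeta_p]$-module structure on $T(\sigma)$ via $\Phi_p(\sigma^\ast)=0$, the $\sigma^\ast$-equivariant gluing isomorphism $A_{S(\sigma)}\cong A_{T(\sigma)}$ forcing $(\zeta_p-1)T(\sigma)^\ast\subseteq T(\sigma)$, and the rank count over the totally ramified prime $(\zeta_p-1)$ giving $a\le m$. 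All steps check out, including the key final one where projectivity over the Dedekind domain $\Z[\zeta_p]$ yields $\dim_{\Z/p\Z}T(\sigma)^\ast/(\zeta_p-1)T(\sigma)^\ast=m$.
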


We will let $r$ denote the rank of $S(\sigma)$. Then we have a set of invariants $(r,a)$ of the lattice $S(\sigma)$. In addition to $r$ and $a$, some authors also work with $m=\frac{22-r}{p-1}$, which is the rank of $T(\sigma)$ as a $\Z[\zeta_p]$-module, for a primitive $p$-th root of unity $\zeta_p$. In this paper our main focus with be on $r$ and $a$, but we will make some use of $m$  (e.g., see Theorem \ref{summary_invariants} and the discussion at the end of Section \ref{sec_LPK3}). Note that the lemma says $a\leq m$. 

Let $X^\sigma$ denote the fixed locus of $\sigma$. In \citep{other_primes}, the authors prove that there is a close connection between the invariants of $S(\sigma)$ and the topological structure of $X^\sigma$. In fact, each determines the other, as demonstrated in the following theorem. We omit the case $p=2$, as we are primarily interested in the cases $p=3,5,7,13$.

\begin{thm}[cf. \citet{other_primes}]\label{summary_invariants}
Let $X$ be a K3 surface with a non-symplectic automorphism $\sigma$ of prime order $p\neq 2$. Then the fixed locus $X^\sigma$ is nonempty, and consists of either isolated points or a disjoint union of smooth curves and isolated points of the following form:
\begin{equation}
X^\sigma=C\cup R_1\cup\ldots\cup R_k\cup \{p_1,\ldots,p_n\}\label{eq_fixed_locus}.
\end{equation}
Here $C$ is a curve of genus $g\geq 0$, $R_i$ are rational curves and $p_i$ are isolated points. 

Furthermore, if $X^\sigma$ contains a curve and $S(\sigma)$ has invariants $(r,a)$, then the following hold:
\begin{itemize}
\item $m=2g+a$;
\item if $p=3$ then $1-g+k=(r-8)/2$ and $n=10-m$ ;
\item if $p=5$ then $1-g+k=(r-6)/4$, $n=16-3m$;
\item if $p=7$ then $1-g+k=(r-4)/6$ and $n=18-5m$;
\item if $p=13$ then $(g,n,k)=(0,9,0)$ and $S(\sigma)=H_{13}\oplus E_8$.
\end{itemize}
\end{thm}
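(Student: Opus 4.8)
The plan is to read off every numerical relation from two fixed--point formulas together with the lattice structure of $T(\sigma)$, handling $p=3,5,7,13$ uniformly until the final step.

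First I would describe the local picture. As $\sigma$ has finite order $p$, its action near a fixed point is linearizable, and in suitable coordinates $(u,v)$ it is $(u,v)\mapsto(\zeta_p^{a_1}u,\zeta_p^{a_2}v)$ with $a_1,a_2\in\{0,\dots,p-1\}$. Choosing the generator of $H^{2,0}(X)$ so that $\sigma^\ast\omega_X=\zeta_p\omega_X$ and using $\omega_X=du\wedge dv$ locally (up to a unit), one gets $a_1+a_2\equiv 1\pmod p$. Thus at most one $a_i$ vanishes: if one does, $X^\sigma$ is a smooth curve near the point (the other eigenvalue being $\zeta_p$); if neither does, the point is isolated. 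Hence $X^\sigma$ is a disjoint union of smooth curves and isolated points, and it is nonempty because the left-hand side $1+\zeta_p^{-1}$ of the holomorphic Lefschetz formula below is nonzero for $p\neq 2$. Every fixed curve is $\sigma$--invariant, so its class lies in the hyperbolic lattice $S(\sigma)$; for two disjoint fixed curves $D_1,D_2$ we have $D_1\cdot D_2=0$ and, by adjunction on a K3 surface, $D_i^2=2g(D_i)-2$. The Hodge index theorem inside $S(\sigma)$ (signature $(1,r-1)$) then forces at most one fixed curve to have positive genus: if $D_1^2>0$ then $D_1^\perp$ is negative definite and $D_2$ cannot have $D_2^2\geq 0$, while two independent orthogonal isotropic classes are impossible in a lattice with a single positive direction; the only borderline configuration, a pair of homologous fixed elliptic curves, is ruled out by inspecting the action induced on the base of the associated elliptic fibration. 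This gives the shape (\ref{eq_fixed_locus}).

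Next I would apply the topological Lefschetz fixed--point formula. For a K3 surface $H^0=H^4=\Q$ carry the trivial action and $H^1=H^3=0$, so the Lefschetz number is $2+\operatorname{tr}(\sigma^\ast\mid H^2)$. Decomposing $H^2(X,\C)=S(\sigma)_\C\oplus T(\sigma)_\C$, the action is the identity on the rank--$r$ summand, while on $T(\sigma)_\C$ each primitive $p$-th root of unity appears with multiplicity $m$; as these roots sum to $-1$ we obtain $\operatorname{tr}(\sigma^\ast\mid H^2)=r-m$. Comparing with $\chi(X^\sigma)=(2-2g)+2k+n$ yields $r-m=-2g+2k+n$, which is exactly the ``$1-g+k=\dots$'' relation of each bullet once $n$ is determined (after substituting $r=22-(p-1)m$). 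The values of $n$ and the identity $m=2g+a$ come instead from the holomorphic Lefschetz fixed--point formula applied to $\mathcal{O}_X$. Its left-hand side is $1+\zeta_p^{-1}$, the action on $H^2(X,\mathcal{O}_X)\cong\overline{H^{2,0}}$ being by $\zeta_p^{-1}$. On the right, each isolated point of type $(a_1,a_2)$ contributes a term of the form $[(1-\zeta_p^{a_1})(1-\zeta_p^{a_2})]^{-1}$ (with a fixed sign convention in the exponents), and the curve $C$ contributes $\int_C \mathrm{td}(C)/(1-\zeta_p^{-1}e^{-c_1(N)})$, which via $C^2=2g-2$ reduces to an explicit expression in $g$ and $\zeta_p$. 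For each $p$ there are only finitely many admissible types (those with $a_1+a_2\equiv 1\pmod p$), so the identity lives in $\Q(\zeta_p)$; expanding it in a $\Q$-basis of $\Q(\zeta_p)$ produces several linear equations in $g$ and the numbers of points of each type. Solving this system together with the topological relation pins down $n$ and the individual type counts for each prime, and $m=2g+a$ follows by matching the Hodge structure on the $\zeta_p$-eigenspace of $T(\sigma)_\C$ (whose $(2,0)$-part is one-dimensional) against the $p$-elementary structure recording the length $a$, in the spirit of \citep{nikulin_finite} and \citep{other_primes}. I expect this step --- enumerating the point types for $p=5,7$, evaluating the curve term, and separating the lattice invariant $a$ from the purely local data --- to be the main obstacle.

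Finally, the case $p=13$ is rigid. Here $m=(22-r)/12$ must be a positive integer, forcing $r=10$ and $m=1$; then $a\leq m=1$ together with $m=2g+a$ gives $g=0$ and $a=1$, and the general relations return $(g,n,k)=(0,9,0)$. As $S(\sigma)$ is an even hyperbolic $p$-elementary lattice of rank $10$ with $a=1$ and $13\equiv 1\pmod 4$, the classification of \citet{rudakov} determines it uniquely, so $S(\sigma)=H_{13}\oplus E_8$.
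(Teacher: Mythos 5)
First, a point of comparison: the paper does not prove Theorem~\ref{summary_invariants} at all --- it is imported (``cf.\ \citet{other_primes}'') from the classification papers of Artebani, Sarti and Taki, so your attempt can only be measured against the arguments in those sources. Measured that way, your sketch follows essentially the same route: linearization at a fixed point together with $a_1+a_2\equiv 1\pmod p$ gives smoothness of $X^\sigma$, and nonemptiness follows from $1+\zeta_p^{-1}\neq 0$; the Hodge index theorem inside the hyperbolic lattice $S(\sigma)$ reduces the positive-genus curves to the case of two homologous elliptic fibers, which is correctly killed by the base action (the differentials of the induced order-$p$ automorphism of $\PP^1$ at its two fixed points are mutually inverse, while non-symplecticity forces both normal eigenvalues to equal $\zeta_p$; this is exactly where $p\neq 2$ enters); and the topological plus holomorphic Lefschetz formulas, expanded in a $\Q$-basis of $\Q(\zeta_p)$ over the finitely many local types, do yield the relations among $r$ (equivalently $m$), $g$, $k$, $n$. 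The $p=13$ endgame ($m=(22-r)/12\in\Z_{>0}$ forces $r=10$, $m=1$, then Rudakov--Shafarevich) is also as in the source.

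The one step that does not go through as written is $m=2g+a$. The justification you offer --- matching the Hodge structure on the $\zeta_p$-eigenspace of $T(\sigma)_\C$ against the $p$-elementary structure --- cannot work, because $a$ is the length of the finite discriminant group $A_{T(\sigma)}\cong(\Z/p\Z)^a$, an integral invariant that is annihilated by tensoring with $\C$; the eigenspace decomposition sees $m$ and the one-dimensional $(2,0)$-part, but carries no trace of $a$. Consequently your system of Lefschetz equations is underdetermined: it relates $g,k,n$ to $r$ but never produces $a$. In \citet{other_primes} this relation requires a genuinely separate input, obtained by comparing $X$ with the resolution of the quotient $X/\sigma$ (equivalently, by Smith-theoretic bounds on cohomology with $\Z/p\Z$-coefficients), which ties $a$ to $\dim H^1(X^\sigma,\Z/p\Z)=2g$ and the component count. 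With that ingredient supplied, the rest of your proposal is a faithful, if compressed, account of the cited proof.
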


\begin{rem} In \citep{order_three} this theorem is rendered slightly differently for the case $p=3$. The difference there is that the invariant $k$ represents the total number of fixed curves, including the curve with (possibly) positive genus. Here and in \citep{other_primes}, the invariant $k$ does not include the curve $C$. We also point out the the case $g=0$ does occur. In that case the curve $C$ is also rational, and the total number of fixed rational curves is $k+1$. (See Example \ref{ex2} for an example.)
\end{rem}

Tables \ref{tab_lattices3} -- \ref{tab_lattices13} in Appendix \ref{tables_lattices} contain a complete classification of the lattices $S(\sigma)$ and $T(\sigma)$ according to their invariants when $\sigma$ is of order $3,5,7$ or $13$.

Note that there is only a single possibility for the isometry class of $S(\sigma)$ when $p=13$.  Since this fact renders our main result (see Theorem \ref{main_thm}) trivial when $p=13$, we focus mainly on the cases $p=3,5,7$.

\section{Mirror Symmetry}\label{mirror}

\subsection{LPK3 mirror symmetry}\label{sec_LPK3}

In this section we describe relevant aspects of mirror symmetry for lattice polarized K3 surfaces. As mentioned, we will refer to this simply as LPK3 mirror symmetry.  For a more complete treatment, we refer the reader to \citep{dolgachev}.

Suppose that $M$ is an even lattice of signature $(1,t)$ with $t\leq 18$ that embeds primitively into $\klat$.  Let $\iota:M\hookrightarrow \klat$ be such an embedding and suppose further that there is a primitive embedding $\iota':U\hookrightarrow \iota(M)^\perp$ of the lattice $U$ into the orthogonal complement of $M$ in $\klat$. Under these assumptions, \citep[Corollary 1.13.3]{nikulin_ISBF} ensures that the isometry class of the orthogonal complement of $\iota'(U)$ inside $\iota(M)^\perp$ is in fact independent of the choices $\iota, \iota'$.  This observation ensures that the following is well-defined:

\begin{defn}\label{mirror_defn}
Let $M$ be a lattice of signature $(1,t)$ with $t\leq 18$ that admits a primitive embedding $\iota:M\hookrightarrow \klat$. We will say $M$ is \emph{mirror-hyperbolic} if its orthogonal complement $\iota(M)^\perp$ in $\klat$ admits a primitive embedding $\iota':U \hookrightarrow \iota(M)^\perp$.  For a mirror-hyperbolic lattice $M$, we define (up to isometry) the \emph{mirror lattice} $\mirror{M}$ of $M$ via the orthogonal decomposition
\[
\iota(M) = \iota'(U) \oplus \mirror{M}.
\]
\end{defn}

Note that $\mirror{M}$ also embeds primitively into $\klat$ and has signature $(1,19-t)$.  By \citep{nikulin_ISBF}, the discriminant groups of $M$ and $\mirror{M}$ are isomorphic.  Thus if $M$ is p-elementary, with invariants $(r,a)$, then $M^\vee$ is also $p$-elementary with invariants $(20-r,a)$.  One easily checks that $\mirror{M}$ is also mirror-hyperbolic and that the mirror lattice of $M^\vee$ is $M$. \footnote{This definition of the mirror lattice uses a more restrictive hypothesis than that in \citep{dolgachev}; in the terminology used there, we are assuming that $M^\perp$ contains an isotropic $1$-admissible vector.  Moreover, our definition in this restricted setting is coarser than the one used by Dolgachev, since we do not keep track of the embedding $U\hookrightarrow M^\perp$ and instead only consider $M^\vee$ up to isometry.}

Now let $X$ be a K3 surface and suppose that $M$ is a lattice of signature $(1,t)$.  If $j\colon M\hookrightarrow S_X$ is a primitive embedding into the Picard lattice of $X$, one calls the pair $(X,j)$ an \emph{$M$-polarized K3 surface}.  More coarsely, we will call the pair $(X,M)$ an \emph{$M$-polarizable} K3 surface if such an embedding $j$ exists.  Note for an $M$-polariziable K3 surface $(X,M)$, the lattice $M$ necessarily embeds primitively into $\klat$.

\begin{defn}
Let $(X,M)$ be an $M$-polarizable K3 surface and $(X',M')$ be an $M'$-polarizable K3 surface, where $M$ and $M'$ are both mirror-hyperbolic.  We will say that $(X,M)$ and $(X',M')$ are \emph{LPK3 mirrors} if $M' = \mirror{M}$ (or equivalently $M=\mirror{(M')}$).
\end{defn}

\begin{rem}
Let $X_{W,G}$ be a $p$-cyclic K3 surface, with its non-symplectic automorphism $\sigma_p$.  We have noted that the invariant lattice $S(\sigma_p)$ embeds primitively into the Picard lattice, yielding the polarizable K3 surface $(X_{W,G},S(\sigma_p))$.  We will also see in Lemma \ref{Tspect} that $S(\sigma_p)$ is mirror-hyperbolic.  If $p=3,5,7$, then our results show that the notion of the mirror lattice of $S(\sigma_p)$ has an elegant formulation in terms of a certain reflection, as we now describe.

By \citep[Theorem 0.1]{other_primes}, the invariant lattice $S(\sigma_p)$ is determined, up to isometry, by the pair of invariants $(r,a)$ associated to $S(\sigma_p)$.  Recalling that $m=\frac{22-r}{p-1}$, the same is then true of the pair $(m,a)$.  When one plots in the $(m,a)$-plane those values that are realized as the invariants of $S(\sigma)$, for some non-symplectic automorphism $\sigma$ of order $p$ on some K3 surface $X$, one arrives at the figures in Appendix \ref{tables_lattices}.  One sees that each figure is nearly symmetric about the line $m=\mu/2$, where
\[
\mu = \frac{24}{p-1} =
\begin{cases}
12 & \text{if } p=3 \\
6 & \text{if } p=5 \\
4 & \text{if } p=7. \\
\end{cases}
\]
The reflection through this line is given by the involution $(m,a)\mapsto (\mu-m,a)$.  Using instead the invariants $(r,a)$, this involution takes the form $(r,a)\mapsto (20-r,a)$.  Note that, as mentioned after Definition \ref{mirror_defn}, this is the same involution relating the invariants of a mirror-hyperbolic lattice $M$ to those of its mirror $M^\vee$.

Studying the figures in Appendix A, one sees that only a few anomalous values (by which we mean that $(m,a)$ appears but $(\mu-m,a)$ does not) keep this involution from being a true symmetry of the figure.  One facet of our results is that none of these anomalous values arise from $p$-cyclic K3 surfaces.  (This will follow from (i) Lemma \ref{Tspect} below and (ii) the fact that $\sigma_3$ always fixes at least one curve on a $3$-cyclic K3 surface, ruling out the invariants $(r,m,a)=(8,7,7)$ for $S(\sigma_3)$).  The conclusion is then that in the case of $p$-cyclic K3 surfaces, the mirror lattice of $S(\sigma_p)$, with invariants $(m,a)$, is exactly the lattice obtained by reflection across the line $m=\mu/2$.
\end{rem}

\subsection{BHCR mirror symmetry}\label{BHCR_sec}
The second formulation of mirror symmetry that we consider comes from the Landau-Ginzburg model for mirror symmetry. This particular formulation of mirror symmetry was developed initially by \citet{berghub}, and later refined by \citet{berghenn} and \citet{krawitz}. 

General Landau-Ginzburg mirror symmetry is defined for a large class of pairs $(W,G)$, where $W$ is an invertible polynomial and $G \subset \Gmax{W}$ is a group of diagonal symmetries. Howevever, we will restrict ourselves to the case where $(W, G)$ satisfies the Calabi-Yau condition. 

Given such a pair, we will define the mirror pair $(W^T,G^T)$ in the following way. First, if $W=\sum_{i=1}^n \prod_{j=1}^n x_j^{a_{ij}}$, we define $W^T = \sum_{i=1}^n \prod_{j=1}^n x_j^{a_{ji}}$. In other words, $W^T$ is the polynomial corresponding to the transpose of the exponent matrix $\A{W}$ of $W$. That $W^T$ is an invertible polynomial follows from the classification in Theorem \ref{atomic}.

Next, using additive notation we define the dual group $G^T$ of $G$ as 
\begin{equation}\label{dualG_def}
G^T = \set{\; g \in \Gmax{W^T} \; | \;\;g\A{W} h^T \in \Z \text{ for all } h \in G \; }.
\end{equation}

From \citet[Proposition 3]{involutions} we have the following useful properties of the dual group.
\begin{itemize}
\item $(G^T)^T = G$
\item If $G_1\subset G_2$, then $G_2^T\subset G_1^T$ and $G_1/G_2\cong G_2^T/G_1^T$. 
\item $(\Gmax{W})^T = \triv$
\item $(J_W)^T=\SLgp{W^T}$
\item In particular, if $J_W\subset G$, then $G^T\subset \SLgp{W}$.
\end{itemize}


It remains to verify that the pair $(W^T, G^T)$ also satisfies the Calabi-Yau condition.  Note that since $W$ satisfies $\sum w_i = d$, the transpose polynomial $W^T$ does as well (this is because $\tfrac{w_i}{d}$ is the sum of the $i$th row of the matrix $\A{W}^{-1}$). Also, the properties of $G^T$ listed above tell us that $J_{W^T} \subset G^T \subset \SLgp{W^T}$. We conclude that $(W^T, G^T)$ indeed satisfies the Calabi-Yau condition.


The mirror construction that we have just described was initially used to identify mirror pairs of Landau-Ginzburg models constructed from $(W, G)$ and $(W^T, G^T)$, respectively. Because the groups $\widetilde G = G/J_W$ and $\widetilde{G^T} = G^T/J_{W^T}$ act on the hypersurfaces $Y_W=\set{W=0}$ and $Y_{W^T}=\set{W^T=0}$, respectively, we can investigate the Calabi-Yau orbifolds $[Y_{W}/\widetilde{G}]$ and $[Y_{W^T}/\widetilde{G^T}]$. \citet{BHCR} established what is known as the Landau-Ginzburg/Calabi-Yau correspondence. Using this correspondence Chiodo and Ruan proved the following theorem, thus establishing a close connection between Landau-Ginzburg mirror symmetry and mirror symmetry for Calabi-Yau orbifolds.

\begin{thm}{\rm{(\citet[Theorem 2]{BHCR})}}\label{CR_thm}
The orbifolds $[Y_{W}/\widetilde{G}]$ and $[Y_{W^T}/\widetilde{G^T}]$ form a mirror pair; that is,
\[
H_{CR}^{p,q}([ Y_{W}/\widetilde{G}],\C) \cong H_{CR}^{n-2-p,q}([ Y_{W^T}/\widetilde{G^T}],\C)
\]
where $H_{CR}(-, \C)$ indicates Chen-Ruan orbifold cohomology.
\end{thm}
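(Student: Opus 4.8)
The plan is to decouple this geometric statement into two independent isomorphisms: a \emph{cohomological Landau--Ginzburg/Calabi--Yau (LG/CY) correspondence}, which identifies each side $H^{*,*}_{CR}([Y_W/\widetilde G],\C)$ with an explicitly described Landau--Ginzburg state space attached to the pair $(W,G)$, and an \emph{algebraic mirror theorem} for these state spaces, which exchanges $(W,G)$ with $(W^T,G^T)$ while flipping the first grading. Chaining the two isomorphisms yields $H^{p,q}_{CR}([Y_W/\widetilde G]) \cong H^{n-2-p,q}_{CR}([Y_{W^T}/\widetilde{G^T}])$.

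First I would set up the state space. For $(W,G)$ I would define a bigraded vector space
\[
\HH_{W,G} = \bigoplus_{g\in G} \left( \QQ_{W_g}\,\omega_g \right)^{G},
\]
where $W_g$ denotes the restriction of $W$ to the fixed locus $\mathrm{Fix}(g)\subset\C^n$, the factor $\QQ_{W_g}$ is its Milnor (Jacobian) ring, and $\omega_g$ is a generator carrying a shift by the \emph{age} (degree-shifting number) of $g$. The $\Q$-bigrading is recorded by the $W$-charges of the monomial generators of $\QQ_{W_g}$ together with the age shift. This sector-by-sector decomposition, indexed by group elements, is built to mirror the twisted-sector decomposition of Chen--Ruan cohomology; the role of the subgroup $J_W\subset G$ is precisely to reconcile the sectors of the hypersurface orbifold with those of the ambient Landau--Ginzburg model.

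Second comes the cohomological LG/CY correspondence. Here I would prove, one group sector at a time, that the $g$-twisted sector of $H_{CR}([Y_W/\widetilde G])$ is isomorphic to the corresponding summand of $\HH_{W,G}$. The untwisted sector is treated by comparing the primitive middle cohomology of the quasismooth hypersurface $Y_W$ with the Jacobian ring of $W$ via Griffiths--Steenbrink residue theory for quasihomogeneous singularities, after passing to $\widetilde G$-invariants; the twisted sectors are handled by applying the same residue description to the fixed-point loci $\mathrm{Fix}(g)$, which are again quasismooth hypersurfaces defined by invertible polynomials. The essential input is that the Chen--Ruan age shift coincides with the age shift built into $\HH_{W,G}$, so the two bigradings match under the identification. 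Third, for the algebraic mirror theorem I would use the transpose relation $\A{W^T}=\A{W}^{T}$ together with the duality between $G$ and $G^T$ encoded in \eqref{dualG_def} and the properties of $G^T$ listed after it to construct an explicit bigraded isomorphism
\[
\HH_{W,G} \; \xrightarrow{\;\sim\;}\; \HH_{W^T,G^T}.
\]
This is Krawitz's mirror map: it matches a monomial in the $g$-sector on the left with the transposed monomial in the sector indexed by the $G^T$-dual of $g$ on the right, interchanging the roles of group elements and monomial exponents. The pairing $g\,\A{W}\,h^T\in\Z$ is exactly the condition guaranteeing that this correspondence is a bijection compatible with the group actions, and under it the bidegree $(p,q)$ is sent to $(n-2-p,q)$, which is the source of the Hodge-diamond flip.

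The main obstacle is the second step, and within it the grading match. Verifying that the twisted-sector decomposition of orbifold cohomology is term-by-term isomorphic to the Landau--Ginzburg state space requires identifying the cohomology of each fixed locus $\mathrm{Fix}(g)$ with a specific Jacobian-ring piece and then checking that the Chen--Ruan age shift agrees exactly with the Landau--Ginzburg charge/age shift, uniformly across all sectors and compatibly with the $\widetilde G$-action. This degree bookkeeping is the delicate technical heart of the argument; once both state spaces are identified with the two sides of Chen--Ruan cohomology, the claimed mirror relation follows formally from the bidegree-flipping isomorphism of the third step.
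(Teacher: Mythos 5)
This theorem is not proved in the paper at all---it is imported verbatim as Theorem 2 of Chiodo--Ruan, and the surrounding text indicates only that it follows from their cohomological LG/CY correspondence. Your two-step outline (identify each Chen--Ruan cohomology with the bigraded Landau--Ginzburg state space $\HH_{W,G}$ sector by sector via the LG/CY correspondence, then apply Krawitz's transpose mirror map $\HH_{W,G}\cong\HH_{W^T,G^T}$ with the bidegree flip) is exactly the strategy of the cited proof, so your proposal is correct and takes essentially the same approach as the source the paper relies on.
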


Now consider a $p$-cyclic K3 surface $X_{W,G}$, so that in particular the weight system of $W$ belongs to one of the 95 families.  Then we have seen that $(W^T, G^T)$ also satisfies the Calabi-Yau condition. A direct check shows that the weight system of $W^T$ belongs to the 95 families. Hence $X_{W^T,G^T}$ is also a $p$-cyclic K3 surface. In light of Theorem \ref{CR_thm}, following \citep{involutions}, we define the \emph{BHCR mirror} of $X_{W,G}$ to be $X_{W^T,G^T}$. 

We have now described two kinds of mirror symmetry, and we expect some form of compatibility in situations where both apply. The next section states our main theorem, which makes precise the sense in which BHCR and LPK3 mirror symmetry are compatible.

\subsection{Main Theorem}\label{main_sec}
Our goal is to show compatibility of BHCR and LPK3 mirror symmetry for $p$-cyclic K3 surfaces $X_{W,G}$ where $p\neq 2$. Clearly, in order to talk about such compatibility, we must first verify that an LPK3 mirror exists for each such surface. This is accomplished by the following lemma.

\begin{lem}\label{Tspect}
If $p=3,5,7,13$, then the lattice $S(\sigma_p)$ is mirror-hyperbolic. 
\end{lem}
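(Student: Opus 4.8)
The plan is to recall that mirror-hyperbolicity of $S(\sigma_p)$ is equivalent to the assertion that $S(\sigma_p)^\perp$ in $\klat$ admits a primitive embedding of the hyperbolic plane $U$. By the Lemma following the definition of $S(\sigma)$ and $T(\sigma)$, both lattices are $p$-elementary with $A_{S(\sigma_p)} \cong A_{T(\sigma_p)} \cong (\Z/p\Z)^a$, and $S(\sigma_p)$ is hyperbolic of signature $(1,r-1)$. Since $S(\sigma_p)$ is primitive in the unimodular lattice $H^2(X,\Z) \cong \klat$, its orthogonal complement is $T(\sigma_p)$, which has signature $(2,20-r)$. So the task reduces to showing that $T(\sigma_p)$, an even $p$-elementary lattice of signature $(2,20-r)$ with discriminant group $(\Z/p\Z)^a$, contains $U$ as a primitive sublattice.

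First I would dispose of the single degenerate case $p=13$ directly: by the last bullet of Theorem \ref{summary_invariants}, $S(\sigma_{13}) = H_{13} \oplus E_8$, and one can simply exhibit the mirror by hand. Since $H_{13}$ is $13$-elementary of length $1$ and $E_8$ is unimodular, $S(\sigma_{13})$ has the right shape; I would verify mirror-hyperbolicity either by computing $T(\sigma_{13})$ explicitly from the known rank and discriminant data, or by appealing to the existence statement below, which also covers this case. For the main cases $p=3,5,7$, the key tool is Nikulin's existence and uniqueness criterion for primitive embeddings of even lattices into even unimodular lattices (\citep{nikulin_ISBF}, the same source cited for Definition \ref{mirror_defn}). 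The strategy is: let $T = T(\sigma_p)$ and seek a primitive embedding $U \hookrightarrow T$. Because $U$ is unimodular, a primitive embedding $U \hookrightarrow T$ exists precisely when $T$ splits off a copy of $U$ orthogonally, i.e. $T \cong U \oplus T'$ for some even lattice $T'$; this follows since the orthogonal complement of a primitively embedded unimodular sublattice is again a lattice with the same discriminant form, and $U$ unimodular forces the splitting.

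Thus the crux is to show that the $p$-elementary lattice $T(\sigma_p)$, of signature $(2, 20-r)$ with discriminant form $(\Z/p\Z)^a$, admits an orthogonal $U$-summand. I would apply Nikulin's theorem (\citep[Theorem 1.14.4 / Corollary 1.13.3]{nikulin_ISBF}): an even lattice of indefinite signature $(t_+,t_-)$ and $p$-elementary discriminant group of length $a$ exists and splits off $U$ as soon as the rank is large enough relative to $a$ — concretely, when $t_+ \geq 1$, $t_- \geq 1$ and $\rk(T) \geq a + 2$, together with the local (signature mod $8$ and $p$-adic) compatibility conditions that are automatically satisfied here because $T(\sigma_p)$ is an actual K3-sublattice complement and hence already embeds in $\klat$. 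Here $t_+ = 2$ and $t_- = 20 - r \geq 1$; the inequality $\rk T = 22 - r \geq a + 2$ is exactly $a \leq 20 - r$, which follows comfortably from the bound $a \leq \frac{\rk T(\sigma_p)}{p-1} = \frac{22-r}{p-1}$ in the Lemma (for $p \geq 3$ this gives $a \leq \frac{22-r}{2}$, far inside the needed range once $r < 20$).

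The main obstacle I anticipate is the boundary behavior: the estimates above are comfortable only when $T(\sigma_p)$ has both positive and negative parts of rank at least $1$ and enough room beyond its length, so the real work is to confirm that the finitely many extremal $(r,a)$ values actually occurring (read off from Tables \ref{tab_lattices3}--\ref{tab_lattices13} in the Appendix) never violate the hypotheses of Nikulin's splitting criterion. Rather than invoke a general inequality blindly, I would cross-check against the classification of \citet{rudakov} quoted in Section \ref{lattice_sec}: since for $p \neq 2$ an even indefinite $p$-elementary lattice of rank $\geq 2$ is uniquely determined by $a$, and $T(\sigma_p)$ is such a lattice, it suffices to confirm that the unique lattice with these invariants is of the form $U \oplus (\text{something even } p\text{-elementary})$ — which is guaranteed by the existence half of the Rudakov–Nikulin classification whenever the signature and length constraints above hold. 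The case that needs the most care is the smallest admissible $r$ (largest transcendental rank), where $a$ is closest to its upper bound; I would check that even there $20 - r \geq a$ holds strictly, completing the argument for all $p = 3,5,7,13$.
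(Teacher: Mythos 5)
Your strategy is genuinely different from the paper's, and it has a gap precisely where the lemma has content. The paper's proof is a table lookup: it reads the isometry class of $T(\sigma_p)$ off the classification in Appendix \ref{tables_lattices}, notes that a primitive $U$ fails to exist only for a short list of invariants, and then verifies from the enumeration in Tables \ref{tab_eq3}--\ref{tab_eq13} that those invariants never occur for a $p$-cyclic K3 surface. You instead try to deduce the splitting from Nikulin's general criterion. But that criterion requires $T(\sigma_p)$ to be indefinite, i.e.\ $t_-=20-r\geq 1$, and you only remark parenthetically that everything is fine ``once $r<20$'' without ever establishing that $r<20$. This is not automatic: $(r,a)=(20,1)$ \emph{is} realized by non-symplectic order-$3$ automorphisms of K3 surfaces --- it appears in Table \ref{tab_lattices3}, with $T(\sigma)=A_2(-1)$ positive definite of rank $2$, into which $U$ cannot embed. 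So the lemma is false for general non-symplectic automorphisms of order $3$; its content is exactly that this value never arises for the surfaces $X_{W,G}$ of the paper, and that is a geometric fact established by the fixed-locus computations recorded in Tables \ref{tab_eq3}--\ref{tab_eq13}, which your proposal never invokes. You also single out the wrong boundary: for small $r$ your inequality $22-r\geq a+2$ holds with plenty of room, and the failure is at the top end $r=20$, not at the bottom.

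A secondary discrepancy: the paper's proof also lists $(r,a)=(6,4)$ for $p=5$ and $(4,3)$ for $p=7$ among the values to be excluded (these are the ``anomalous'' invariants discussed at the end of Section \ref{sec_LPK3}, whose reflections $(20-r,a)$ are not realized by any order-$p$ non-symplectic automorphism), whereas your numerical criterion would accept both without comment. Whatever one concludes about whether $U$ abstractly splits off the corresponding $T(\sigma)$ in those two cases, they must in any event be ruled out for Theorem \ref{main_thm} to hold, and the only available mechanism is again the case-by-case enumeration. In short, no general inequality substitutes for the inspection of Tables \ref{tab_eq3}--\ref{tab_eq13}; without that step your proof is incomplete.
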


\begin{proof}
This amounts to showing that $U$ embeds primtively into $T(\sigma_p)$.  Upon consulting Tables \ref{tab_lattices3} -- \ref{tab_lattices13}, this is equivalent to showing that the invariants $(r,a)$ of $S(\sigma_p)$ do not take on the following values:
\begin{itemize}
\item For $p=3$: $(r,a) = (20,1)$
\item For $p=5$: $(r,a) = (6,4)$
\item For $p=7$: $(r,a) = (4,3)$.
\end{itemize}
This in turn follows from inspection of Tables \ref{tab_eq3} -- \ref{tab_eq13}.
\end{proof}

We are now ready to state our main result, which is the following:

\begin{thm}\label{main_thm}
Consider a $p$-cyclic K3 surface $X_{W,G}$ with its non-symplectic automorphism $\sigma_p$ of prime order, along with its BHCR mirror $X_{W^T, G^T}$ with its non-symplectic automorphism, which we will denote as $\sigma_p^T$. Then $\left(X_{W,G}, S(\sigma_p)\right)$ and $\left(X_{W^T,G^T}, S(\sigma_p^T\right))$ are LPK3 mirrors. 
\end{thm}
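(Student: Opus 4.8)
The plan is to reduce the assertion to an equality of lattice invariants. Both surfaces are polarizable by their invariant lattices (the Remark preceding the theorem), and both $S(\sigma_p)$ and $S(\sigma_p^T)$ are mirror-hyperbolic by Lemma \ref{Tspect}, so the two pairs are LPK3 mirrors precisely when $S(\sigma_p^T) = \mirror{S(\sigma_p)}$. For $p = 3,5,7$ the lattice $S(\sigma_p)$ is hyperbolic and $p$-elementary, and by \citep{rudakov} a hyperbolic $p$-elementary lattice (for $p\neq 2$) is determined up to isometry by its rank $r$ and the integer $a$. Hence it suffices to show that the invariants $(r^T,a^T)$ of $S(\sigma_p^T)$ satisfy $r^T = 20 - r$ and $a^T = a$, which is exactly the involution $(r,a)\mapsto(20-r,a)$ identified after Definition \ref{mirror_defn} as the one relating a mirror-hyperbolic lattice to its mirror. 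The case $p=13$ is immediate: there is a unique isometry class $S(\sigma_{13}) = H_{13}\oplus E_8$, with invariants $(10,1)$, so it is self-mirror and the statement holds trivially.

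First I would translate the target into fixed-locus data. By Theorem \ref{summary_invariants} the pair $(r,a)$—equivalently $(m,a)$ with $m = (22-r)/(p-1)$—is recovered from the triple $(g,k,n)$ recording the genus of the distinguished fixed curve, the number $k$ of fixed rational curves, and the number $n$ of isolated fixed points of $\sigma_p$, via $m = 2g+a$ together with the $p$-dependent relations listed there. The geometric input is that for $W = x_1^p + f(x_2,x_3,x_4)$ the automorphism $\sigma_p = (\tfrac1p,0,0,0)$ fixes on $Y_W$ the curve $\{x_1 = 0\} = \{f = 0\}$ in $\PP(w_2,w_3,w_4)$, whose genus (after descending to $X_W/\widetilde G$ and resolving) is computable from the weight system, while the remaining isolated points and rational curves come from resolving the weighted-projective and quotient singularities in passing from $Y_W$ to $X_{W,G}$. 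Carrying out this computation for both $(W,G)$ and $(W^T,G^T)$ produces both invariant pairs.

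The crux—and the step I expect to be the main obstacle—is to show that transposition $W\mapsto W^T$ and dualization $G\mapsto G^T$, which are purely combinatorial operations on $\A{W}$ and $G$, induce exactly the geometric involution $(r,a)\mapsto(20-r,a)$. One conceptual route is to refine the Chiodo--Ruan isomorphism of Theorem \ref{CR_thm} so that it respects the $\sigma_p$-action: writing $H^2(X_{W,G},\C) = \bigoplus_{j} H^2_{j}$ for the $\sigma_p^\ast$-eigenspace decomposition, one has $r = \dim H^2_{0}$ and $m = \dim H^2_{j}$ for each nontrivial eigenvalue $\zeta_p^j$, and a suitably equivariant matching against the mirror side would give $a^T = a$ together with $m + m^T = \mu = 24/(p-1)$, equivalently $r^T = 20 - r$, in one stroke. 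The more elementary route, guaranteed to terminate by finiteness, is to enumerate all admissible data: only $41$ weight systems occur for $p\neq 2$, each admitting finitely many polynomials of the form \eqref{form_of_w} by Theorem \ref{atomic} and finitely many groups $G$ with $J_W \subset G \subset \SLgp{W}$; one then tabulates $(r,a)$ and $(r^T,a^T)$ in every case and checks the relation directly, with the worked examples in Section \ref{examples} establishing the template that the tables in Section \ref{tables} apply uniformly.
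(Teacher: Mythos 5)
Your proposal follows essentially the same route as the paper: reduce to the numerical check $(r^T,a^T)=(20-r,a)$ via the classification of hyperbolic $p$-elementary lattices, compute $(r,a)$ from the fixed-locus data $(g,n,k)$ using Theorem \ref{summary_invariants}, and verify the relation by exhaustive enumeration over the finitely many admissible pairs $(W,G)$ — which is exactly what the paper's tables carry out (with the one non-cyclic case No.\ 3a handled separately). The ``equivariant Chiodo--Ruan'' route you sketch as an alternative is not pursued in the paper and is not needed.
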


\begin{proof}
The theorem will be verified if $\mirror{S(\sigma_p)}=T(\sigma_p)/U\cong S(\sigma_p^T)$. This in turn follows by checking that the invariants of $S(\sigma_p)$ and $S(\sigma_p^T)$ are $(r,a)$ and $(20-r,a)$, respectively. Thus, at the heart of this proof is the determination of the isometry class of $S(\sigma_p)$ for all $p$-cyclic K3 surfaces $X_{W,G}$ with $p\neq 2$. Detailed illustrations of the methods behind this determination are given in Section \ref{ex_sec}, but the brief idea is to find the topological invariants of the fixed locus of $\sigma_p$ acting on $X_{W,G}$, which by Theorem \ref{summary_invariants} will determine the isometry classes of $S(\sigma_p)$. A complete list of all possibilities for $W$ is found in Tables \ref{tab_eq3} -- \ref{tab_eq13} (and we have shown in Section \ref{K3_sec} how to obtain them).  These same tables list the invariants $(r,a)$ of $S(\sigma_p)$.  Thus, our theorem follows from inspection of Tables \ref{tab_eq3} -- \ref{tab_eq13}, in the following manner.

First, in all but one exceptional case (namely No.\ 3a in Table \ref{tab_eq3}), the group $\SL_W/J_W$ is cyclic, and hence a subgroup $J_W\subset G\subset \SL_W$ is uniquely determined by the order of $G/J_W$.  Thus given a $p$-cyclic K3 surface $X_{W,G}$, one finds the row of the table corresponding to $(W,G)$, and from this one may (in the non-exceptional cases) read off the invariants $(r,a)$ of $S(\sigma_p)$ and the number of $W^T$ (labelled as ``BHCR dual'').  Noting that
\[
|G^T/J_{W^T}| = |\SL_W/G| = \frac{|\SL_W/J_W|}{|G/J_W|},
\]
one may then find the row corresponding to $(W^T,G^T)$ and verify the aforementioned symmetry between the invariants of $S(\sigma_p)$ and $S(\sigma_p^T)$.

More details about the exceptional case No.\ 3a are found Example \ref{ex3}.
\end{proof}

\section{Examples}\label{examples}

\label{ex_sec}
To illustrate the techniques used to compute the invariants given in Tables \ref{tab_eq3} -- \ref{tab_eq13}, we will give some examples. All cases are similar to those highlighted here. We begin with a few results that we will use later. 

We start with an equation $x_1^p+f(x_2,x_3,x_4)$ in $\PP(w_1,w_2,w_3,w_4)$, with the non--symplectic automorphism $\sigma_p$.
In order to describe the surface $Y_W=\set{W=0}\subset\PP(w_1,w_2,w_3,w_4)$ more explicitly, we will find the singular points. Since $Y_W$ is quasismooth, the singular points $Y_W$ only occur at singular points of $\PP(w_1,w_2,w_3,w_4)$. To find them, we examine the action of $\Cstar$ on $\C^4$, which is given by 
\[
\lambda\cdot (x,y,z,w)=(\lambda^{w_1}x, \lambda^{w_2} y, \lambda^{w_3} z, \lambda^{w_4} w).
\] 
The singularities in question will be occur at points $\pp xyzw\in Y_W$ for which two or three of the coordinates vanish and for which the action of $\Cstar$ has non-trivial isotropy. Resolution of these points will yield $X_{W,J_W}$. Exceptional curves of the resolution will often contribute to the fixed locus of $\sigma_p$. 

Recall that we can calculate $(r,a)$ if we know the topological invariants $(g,n,k)$. We can find $n$ and $k$ explicitly from the action of $\sigma_p$. In order to compute $g$, we need the following lemma, which can be found, e.g., in \citep[Theorem 12.2]{fletcher}.

\begin{lem}\label{eq_genus} The genus of a quasismooth curve of degree $d$ in $\PP(w_1,w_2,w_3)$ is 
\begin{equation*}
g(C)=\frac{1}{2}\left(\frac{d^2}{w_1w_2w_3}-d\sum_{i>j\geq1}\frac{\gcd(w_i,w_j)}{w_iw_j}+\sum_{i=1}^3\frac{\gcd(w_i,d)}{w_i}-1\right).
\end{equation*}
\end{lem}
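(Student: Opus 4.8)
The plan is to compute the genus by adjunction, viewing $\PP(w_1,w_2,w_3)$ as a smooth Deligne--Mumford stack $\mathcal{P}$ and correcting for the orbifold points through which $C$ passes. First I would observe that a quasismooth curve $C\subset\PP(w_1,w_2,w_3)$ is in fact a \emph{smooth} curve: over the smooth locus of $\PP(w_1,w_2,w_3)$ this is immediate from quasismoothness, while over a cyclic quotient singularity $C$ is locally the quotient of a smooth curve germ by a cyclic group, hence again smooth. Since quasismoothness is an open condition and the genus is a deformation invariant within the connected family of quasismooth degree-$d$ hypersurfaces, it suffices to treat a general member.

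Next I would apply stacky adjunction on $\mathcal{P}$. Writing $K_\mathcal{P}=\mathcal{O}(-(w_1+w_2+w_3))$ and $[C]=\mathcal{O}(d)$, and using the stacky intersection numbers $\mathcal{O}(a)\cdot\mathcal{O}(b)=\frac{ab}{w_1w_2w_3}$, adjunction gives
\[
\deg_C K_C^{\mathrm{orb}}=(K_\mathcal{P}+[C])\cdot[C]=\frac{d\,(d-w_1-w_2-w_3)}{w_1w_2w_3}.
\]
I would then relate this orbifold degree to the honest genus $g$ of the coarse curve via $\deg_C K_C^{\mathrm{orb}}=(2g-2)+\sum_{p}\left(1-\tfrac{1}{n_p}\right)$, the sum running over the orbifold points $p$ of $C$ with local isotropy of order $n_p$. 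Expanding $\frac{d(w_1+w_2+w_3)}{w_1w_2w_3}=d\sum_{i>j}\frac{1}{w_iw_j}$ already produces the leading term $\frac{d^2}{w_1w_2w_3}$ of the claimed formula, and the remaining terms must come from the orbifold correction.

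The heart of the proof is the enumeration of the orbifold points of $C$, in two families. Along the interior of each coordinate edge $E_k=\{x_k=0\}$ (joining $P_i$ and $P_j$) the isotropy has order $g_{ij}=\gcd(w_i,w_j)$; since the stacky intersection $C\cdot E_k=\frac{d}{w_iw_j}$ counts each such point with weight $1/g_{ij}$, there are $\frac{d\,g_{ij}}{w_iw_j}$ of them, contributing $\sum_{i>j}\frac{d\,(g_{ij}-1)}{w_iw_j}$. At each coordinate vertex $P_i$ (isotropy of order $w_i$) a local computation in the quotient chart should show that $C$ passes through $P_i$ exactly when $w_i\nmid d$, with local orbifold order $w_i/\gcd(w_i,d)$, contributing $1-\frac{\gcd(w_i,d)}{w_i}$ (and $0$ when $w_i\mid d$, the case in which quasismoothness forces the monomial $x_i^{d/w_i}$ to occur, so that $P_i\notin C$). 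Substituting both contributions and solving for $g$ collapses to $g=\frac12\left(\frac{d^2}{w_1w_2w_3}-d\sum_{i>j}\frac{\gcd(w_i,w_j)}{w_iw_j}+\sum_i\frac{\gcd(w_i,d)}{w_i}-1\right)$, which is exactly the stated formula; I have checked that this assembly is numerically correct, e.g.\ for $\PP(1,1,1)$ and for both parities of $d$ in $\PP(1,1,2)$.

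The main obstacle will be the local orbifold analysis at the vertices: one must prove, by an explicit examination of $F$ in the $\mu_{w_i}$-chart, that $C$ meets $P_i$ precisely when $w_i\nmid d$ and that its local orbifold order there is $w_i/\gcd(w_i,d)$, and one must verify that the edge and vertex contributions assemble consistently even though each vertex is shared by the two adjacent edges. An alternative route, closer to the cited argument of Fletcher and automating this arithmetic, is to read $g=\dim H^1(C,\mathcal{O}_C)$ off the Hilbert--Poincar\'e series $P_C(t)=\frac{1-t^d}{\prod_i(1-t^{w_i})}$ of the graded coordinate ring; this trades the local geometry for a careful treatment of well-formedness and of the identification of the coordinate ring with the section ring.
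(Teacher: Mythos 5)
The paper does not actually prove this lemma; it is quoted directly from Fletcher [Theorem 12.2], whose argument runs through the graded coordinate ring rather than through orbifold geometry, so your first route has no counterpart in the paper to compare against. Your global setup is nevertheless sound: the reduction to a general member, the adjunction computation $\deg K_C^{\mathrm{orb}}=d(d-w_1-w_2-w_3)/(w_1w_2w_3)$, and the identity
\[
\sum_p\Bigl(1-\tfrac{1}{n_p}\Bigr)\;=\;d\sum_{i>j}\frac{\gcd(w_i,w_j)-1}{w_iw_j}+\sum_{i=1}^3\Bigl(1-\frac{\gcd(w_i,d)}{w_i}\Bigr)
\]
that your enumeration must produce are all correct.

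But the enumeration itself --- which you rightly call the heart of the proof --- is false as stated whenever $C$ passes through a vertex lying on a stacky edge. Take $\PP(1,2,4)$ and $d=6$, with $C$ the quasismooth curve $y^3+yz+x^2z+x^6+\cdots=0$ (isomorphic to a cubic in $\PP(1,1,2)$, so $g=0$, as the formula predicts). Your count of interior points of the edge $E_1=\set{x=0}$ is $d\gcd(w_2,w_3)/(w_2w_3)=6\cdot 2/8=3/2$, which is not even an integer; in fact $C$ meets the interior of $E_1$ in a single $\mu_2$-point, because part of the stacky intersection number $C\cdot E_1=3/4$ is absorbed at the vertex $P_3=[0:0:1]$, through which $C$ passes. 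At that vertex the quasismoothness monomial is $yz$, so in the $\mu_4$-chart the curve is a graph $y=\phi(x)$ over the $x$-axis and $\mu_4$ acts on the parameter $x$ with order $w_3/\gcd(w_3,w_1)=4$, not $w_3/\gcd(w_3,d)=2$: the local orbifold order is governed by the weight of the \emph{tangent} coordinate, whereas $\gcd(w_i,d)=\gcd(w_i,w_j)$ sees the weight of the \emph{transverse} coordinate $x_j$ supplied by quasismoothness. The two errors cancel ($1/2+3/4=3/4+1/2=5/4$), and some such cancellation must occur in general since the total is forced by adjunction; but proving that the vertex excess always exactly compensates the overcount on the adjacent edge is precisely the ``assembly'' you defer, and it is where the real content lies. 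Note also that this defect is not avoidable in the paper's applications, since the fixed curves there live in non-pairwise-coprime weight systems such as $\PP(12,8,3)$. Until the vertex/edge redistribution is carried out (tracking, at each vertex of $C$, the tangent direction and the resulting excess intersection with the edge to which $C$ is tangent), the proof has a genuine gap; your fallback via the Hilbert--Poincar\'e series $\frac{1-t^d}{\prod_i(1-t^{w_i})}$ is closer to Fletcher's actual argument and bypasses this local analysis entirely.
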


With these results in mind, we will now give several examples. The first example, in which $W^T=W$, will give the most detail. The second example differs only in that $W^T\neq W$. In the third example, we resolve the ambiguity in the K3 surface numbered 3a from Table \ref{tab_eq3} by computing each subgroup of $\SL_W$ and its transpose. 

\begin{ex}\label{ex1}
We will work through the calculations for the K3 surfaces arising from the polynomial $W=x^2 + y^3+z^8 + w^{24}$ in detail. This is No.\ 13d in Table \ref{tab_eq3}. Note that the order of the variables has been changed to match the table, so that $x_1$ in (\ref{form_of_w}) corresponds with the variable $y$ in $W$. 

The weight system for $W$ is $(12,8,3,1;24)$, and so in additive notation we have $j_{W} = \left(\frac{1}{2}, \frac{1}{3}, \frac{1}{8}, \frac{1}{24}\right)$. The exponent matrix is
\[
\A{W} = 
\left( \begin{array}{cccc}
2 & 0 & 0 & 0 \\
0 & 3 & 0 &0 \\
0 & 0 & 8 &0 \\
0 & 0 & 0 & 24\end{array} \right). 
\]
Since $\A{W}$ is symmetric, we see that $W^T=W$.

We need to find all subgroups $G\subset \Gmax{W}$ satisfying $J_W \subset G \subset \SL_{W}$. By Section \ref{BHCR_sec} we know $\Gmax{W}/\SLgp{W}\cong J_{W^T}\cong J_{W}$.
By Section \ref{quasi_sec} we have $|\Gmax{W}|=\det (A_{W})=1152$. As $|J_W|=24$, we can see that $|\SLgp{W}/J_W| = 2$ as in Figure \ref{group_lattice}. So there are two possible choices for the group $G$, namely $J_W$ and $\SLgp{W}$. 
\begin{figure}[h]
\centering
\begin{tikzpicture}
[xscale = 1, yscale = 1]
    \draw (1,.2) --(1,.8) ;
    \draw (1,1.2) --(1,1.8) ;
    \draw (1,2.2) --(1,2.8) ; 
    \node at (1,0) {$\triv$};
    \node at (1,1) {$J_{W}$};
    \node at (1,2) {$\SL_{W}$};
    \node at (1,3) {$G_W$};
    \draw [decorate,decoration={brace,amplitude=9pt},xshift=-12pt,yshift=0pt] (1,0) -- (1,3)node [black,midway,xshift=-25pt] { $1152$};
    \node [right] at (1.2,.5){24};
    \node [right] at (1.2, 1.5){2};
    \node [right] at (1.2,2.5){24};
\end{tikzpicture}
\caption{Subgroup lattice for $G_W$ when $W =x^2+y^3+z^8+w^{24}$ }
\label{group_lattice}
\end{figure}
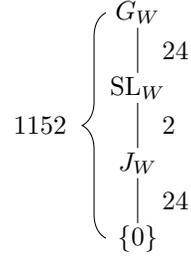

The non--symplectic automorphism $\sigma_3$ of order 3 is 
\[
\pp xyzw \mapsto \pp x{\zeta_3y}zw.
\]
Written additively, this is $\sigma_3=(0,\tfrac 13,0,0)$. Now we have two K3 surfaces to consider, which are BHCR mirrors, namely $X_{W,J_W}$ and $X_{W,\SLgp{W}}$. We need to show that $(X_{W,J_W}, S(\sigma_3))$ and $(X_{W,\SLgp{W}},S(\sigma_3))$ are LPK3 mirrors. To do this we consider each one separately. 

\begin{itemize}
\item $(W,J_W)$: 

We will first describe the surface $Y_W=\set{W=0}\subset\PP(12,8,3,1)$ more explicitly, and then describe its resolution $X_{W,J_W}$. Since $Y_W$ is quasismooth, the singular points of surface $Y_W$ only occur at singular points of $\PP(12,8,3,1)$. To find them, we examine the action of $\Cstar$ on $\C^4$, which is given by 
\[
\lambda\cdot (x,y,z,w)=(\lambda^{12}x, \lambda^8 y, \lambda^3 z, \lambda w).
\] 
As mentioned previously, the singularities in question will be occur at points $\pp xyzw\in Y_W$ for which two or three of the coordinates vanish and for which the action of $\Cstar$ has non-trivial isotropy. 

From the action, we see that the point $\pp1{-1}00 \in Y_W$ is a point with $\Zfin{4}$ isotropy, arising from $\lambda=i$.
Since the order of the isotropy is 4, $Y_W$ has an $A_3$ singularity at this point.

Moreover, the points $\pp {\pm i}010$ are 2 points with $\mathbb Z/3\mathbb Z$ isotropy arising from $\lambda=\zeta_3$ and so they give $2A_2$ singularities in $Y_W$. Resolving these singularities, we obtain $X_W$. We have diagrammed the resolution in Figure \ref{xw}. 

\begin{figure}[ht]
\centering
\begin{tikzpicture}[xscale=.6,yscale=.5]
\draw (0,6)--(6.5,6);
\node [left] at (0,6){$y=0$};
\draw (0,2)--(6.5,2);
\node [left] at (0,2){$w=0$};

\draw [thick] (2.2,1.8)--(0.8,4.2);
\draw [thick] (0.8,3.8)--(2.2,6.2);

\draw [thick] (4.2,1.8)--(2.8,4.2);
\draw [thick] (2.8,3.8)--(4.2,6.2);

\draw [thick] (4.8,2.2)--(6.2,.8);
\draw [thick] (6.2,1.2)--(4.8,-.2);
\draw [thick] (4.8,.2)--(6.2,-1.2);
\end{tikzpicture}
\caption{Curve configurations on $X_W$ in Example \ref{ex1}}
\label{xw}
\end{figure}
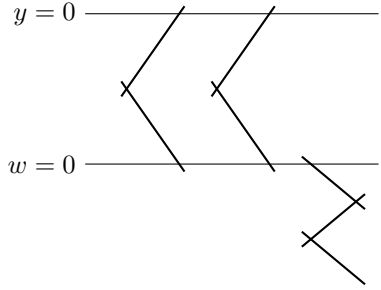

All that remains is to calculate the invariant lattice of the non-symplectic automorphism $\sigma_3$. This is determined by $r$ and $a$, which in turn can be calculated by considering the fixed locus of the action of $\sigma_3$ on $X_W$. 

The curve $y=0$ is obviously fixed. Furthermore, by the calculation 
\begin{align*}
\pp x{\zeta_3y}zw &= \pp{\zeta_3^{12}x}{\zeta_3^9 y}{\zeta_3^3z}{\zeta_3 w}\\
    &=\pp xyz{\zeta_3w},
\end{align*}
it follows that an alternate description of $\sigma_3$ is 
\[
\pp xyzw \mapsto \pp xyz{\zeta_3w}.
\]
From this description, we see that $w=0$ is also fixed. 

To describe this computation more generally, we want to find other descriptions of $\sigma_3$. Since we are working on the quotient by $\Cstar$, written additively, we have 
\[
(\gamma_1,\gamma_2,\gamma_3,\gamma_4)\sim (0,\tfrac 13, 0,0)\imod{\Z}
\]
if and only if there exists $q\in \Q/\Z$ such that 
\[
(0,\tfrac 13, 0,0)+(12q,8q,3q,q)\equiv (\gamma_1,\gamma_2,\gamma_3,\gamma_4)\imod{\Z}.
\]
In general, we want to consider representatives of $\sigma_3$ which fix one or more coordinates. There are only finitely many $q$ giving us such a representative. 

By Lemma \ref{eq_genus}, the curves $y=0$ and $w=0$ have genus $3$ and $0$, respectively.


Finally, we can compute $n$ and $k$ explicitly from the action of $\sigma_3$ by examining Table \ref{tab_lattices3}. Since $w=0$ is fixed by $\sigma_3$, each of the exceptional divisors from the resolution of singularities is invariant (though perhaps not fixed pointwise). Nothing else is fixed. Since the fixed curves must be disjoint, by examining Table \ref{tab_lattices3} we find that the only possible configuration of fixed points and fixed curves has one curve of genus 3, three isolated points and 2 rational curves, i.e. $(g,n,k)=(3,3,2)$. From there one can easily compute $(r,a)$ using Theorem \ref{summary_invariants}.

Referring to Table \ref{tab_lattices3} in the Appendix, we see that the automorphism $\sigma_3$ has invariant lattice $S(\sigma_3) = U\oplus E_6$, $S(\sigma_3)^\perp=T(\sigma_3)=U\oplus U\oplus A_2\oplus E_8$, and so $\mirror{S(\sigma_3)}=U\oplus A_2\oplus E_8$. 

\medskip

\item $(W,\SLgp{W})$:

First we provide a description of $X_W/\widetilde{\SL}_W$. To do so, we find the isotropy points of the action of $\widetilde{\SL}_W = \SL_{W}/J_{W}$ on $X_W$. Here $\widetilde{\SL}_W\cong\mathbb Z/2\mathbb Z$ and hence, from \citep[Section 5]{nikulin_finite}, we know that the action of $\widetilde{\SL}_W$ on the resolution of $X_{W}$ has eight fixed points, each resulting in an $A_1$ singularity on the quotient (see also \citep{gang}).

In general, for $J_W\subset G\subset \SL_W$, we find these fixed points by finding representatives $\gamma_1,\dots, \gamma_k$ for generators of $G\Cstar/\Cstar\cong G/J_W$. Then, similar to the previous calculation with $\sigma_3$, we find other representatives $\gamma_i'\sim \gamma_i$ such that $\gamma_i'$ fixes one or more coordinates. In this case we need only one generator of $\SL_W/J_W$, which we may take as $\gamma=(\tfrac 12,0,0,\tfrac 12)$. The other representatives we need to consider are $(\tfrac 12,0,\tfrac 12, 0)$ and $(0,0,\tfrac 38, \tfrac 58)$. 

From the first representative, we find that the point $\pp0{-1}10$ is fixed. From the second, the points $[0:\zeta_6^j:0:1]$, $j=1,3,5$ (where $\zeta_6$ is a primitive $6$-th root of unity) are fixed. From the third representative, we find that the remaining four isotropy points lie on the resolution of the $A_3$ singularity. To complete the analysis, the configurations of curves from the $A_2$ singularities, which were obtained by blowing up the points $\pp{\pm i}010$, are permuted by $\gamma$; hence these two configurations become identified on the quotient $X/\widetilde{\SL}_W$.

Note that $\widetilde{\SL}_W$ preserves the curves $y=0$ and $w=0$, so that the diagram for the resolution $X_{W,\SL_W}$ of the quotient $X_W/\widetilde{\SL}_W$ looks as in Figure \ref{xwsl}. There we have labeled the images of $y=0$ and $w=0$ somewhat abusively as $y=0$ and $w=0$, and we have represented the new exceptional curves with dashed lines. Note that there are eight of them. 

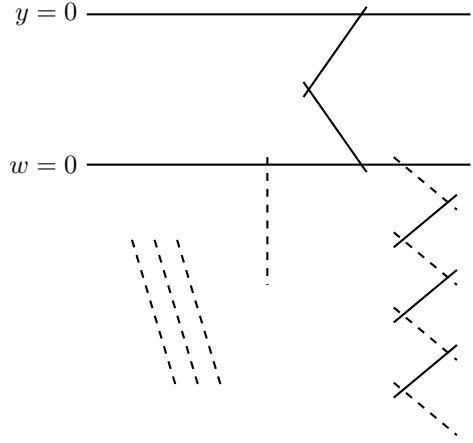
\begin{figure}[ht]
\centering\begin{tikzpicture}[xscale=.6,yscale=.5]
\draw [thick](-2,6)--(6.5,6);
\node [left] at (-2,6){$y=0$};
\draw [thick](-2,2)--(6.5,2);
\node [left] at (-2,2){$w=0$};

\draw [thick] (4.2,1.8)--(2.8,4.2);
\draw [thick] (2.8,3.8)--(4.2,6.2);

\draw [thick, dashed] (0,0)--(1,-4);
\draw [thick, dashed] (-.5,0)--(.5,-4);
\draw [thick, dashed] (-1,0)--(0,-4);

\draw [thick, dashed] (2,2.2)--(2,-1.2);

\draw [thick,dashed] (4.8,2.2)--(6.2,.8);
\draw [thick] (6.2,1.2)--(4.8,-.2);
\draw [thick,dashed] (4.8,.2)--(6.2,-1.2);
\draw [thick] (6.2,-.8)--(4.8,-2.2);
\draw [thick,dashed] (4.8,-1.8)--(6.2,-3.2);
\draw [thick] (6.2,-2.8)--(4.8,-4.2);
\draw [thick,dashed] (4.8,-3.8)--(6.2,-5.2);

\end{tikzpicture}

\caption{Curve configurations on $X_{W,\SL_W}$ in Example \ref{ex1} (dashed curves arise from resolution of $X_W/\widetilde{\SL}_W$)}

\label{xwsl}
\end{figure}

We can use the Reimann-Hurwitz formula to calculate the genus of (the image of) the curve $y=0$ in $X_{W,\SL_W}$. Indeed, we have a degree 2 cover with no ramification. Thus 
\begin{align*}
2-2g_{old} &= 2 (2-2g_{new}) - \sum_{\rho \in S\p} (e_{\rho} -1)\\
2 - 2\cdot 3 &= 2(2 - 2g_{new}) - 0\\
g_{new} &= 2
\end{align*}
In the same way, we find that the genus of the curve $w=0$ in the resolved surface is 0. Indeed, we have a degree 2 cover by a curve of genus 0, with four points of ramification index 2.  Thus
\begin{align*}
2-2g_{old} &= 2 (2-2g_{new}) - \sum_{\rho \in S\p} (e_{\rho} -1)\\
2 - 2\cdot 0 &= 2(2 - 2g_{new}) - 4\\
g_{new} &= 0
\end{align*}


Finally, when considering the action of $\sigma_3$ on $X_{W,\SL_W}$, $y=0$ and $w=0$ are invariant. Furthermore, three of the exceptional curves are permuted by $\sigma_3$ and the other ten are invariant. In order to compute $n$ and $k$, we again consult Table \ref{tab_lattices3}. We find that the only possible configuration of fixed points and fixed curves has one curve of genus 2, five isolated points and four rational curves, i.e. $(g,n,k)=(2,5,4)$. Thus $(r,a)=(12,1)$. By Table \ref{tab_lattices3}, the invariant lattice is then $S(\sigma_3)= U\oplus E_8\oplus A_2$. 
\end{itemize}

Since the mirror lattice in the first case matches the fixed lattice in the second, this verifies Theorem \ref{main_thm} in this example.

\end{ex}


\begin{ex}\label{ex2}
We now describe an example in which $W\neq W^T$.  The calculation of the invariants $(r,a)$ in the various cases proceeds exactly as in Example \ref{ex1}.  We summarize the results below.

To start, consider
\[
W = x^2+y^5+z^5+xw^5,
\]
which corresponds to No.\ 6c in Table \ref{tab_eq5}.  This is quasihomogenous with weight system $(5,2,2,1;10)$ and has the non-symplectic automorphism $\sigma_5 = (0,\frac15,0,0)$.  Its transpose is
\[
W^T = x^2w+y^5+z^5+w^5,
\]
which corresponds to No.\ 21a in Table \ref{tab_eq5} and has weight system $(2,1,1,1;5)$.  Thus $j_W = (\frac12,\frac15,\frac15,\frac1{10})$ has order 10 and $j_{W^T} = (\frac25,\frac15,\frac15,\frac15)$ has order 5.  From this we conclude that
\[
|G_W/SL_W| = |(\SL_W)^T/(G_W)^T| = |J_{W^T}/\set{0}| = 5.
\]
Since $|G_W| = \det(A_W) = 250$ and $|J_W|=10$, it follows that one has the diagram of subgroups of $G_W$ to be as in Figure \ref{group_lattice_2}.
\begin{figure}[h]
\centering
\begin{tikzpicture}
[xscale = 1, yscale = 1]
    \draw (1,.2) --(1,.8) ;
    \draw (1,1.2) --(1,1.8) ;
    \draw (1,2.2) --(1,2.8) ;
    
    \node at (1,0) {$\triv$};
    \node at (1,1) {$J_{W}$};
    
    \node at (1,2) {$\SL_{W}$};
    \node at (1,3) {$G_W$};
    \draw [decorate,decoration={brace,amplitude=9pt},xshift=-12pt,yshift=0pt] (1,0) -- (1,3)node [black,midway,xshift=-25pt] { $250$};
    \node [right] at (1.2,.5){10};
    \node [right] at (1.2, 1.5){5};
    \node [right] at (1.2,2.5){5};
\end{tikzpicture}
\caption{Subgroup lattice for $G_W$ when $W=x^2+y^5+z^5+xw^5$}
\label{group_lattice_2}
\end{figure}
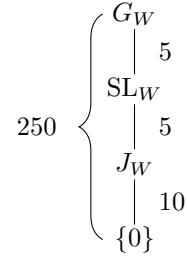
In particular, the index of $J_W$ in $\SL_W$ is 5, and so the only subgroups $J_W \subset G\subset \SL_W$ are $G=J_W$ and $G=\SL_W$.  

\begin{itemize}

\item $(W,J_W)$:

One finds that $Y_W$ contains $5A_1$ singularities (comprising the set $\set{x=w=0}$) that resolve to yield five exceptional curves on $X_W$.  These five curves are permuted by the action of $\sigma_5=(0,\frac15,0,0)$.  Also, $\sigma_5$ fixes the curve $y=0$, which has genus $g=2$ by Lemma \ref{eq_genus}. Finally, an equivalent form of $\sigma_5$ is $(0,0,\frac 45,\frac 25)$, which is seen to fix the point corresponding to $z=w=0$. Hence we have $(g,n,k)=(2,1,0)$ and by Theorem \ref{summary_invariants}, the invariants of $S(\sigma_5)$ are $(r,a)=(2,1)$.  It follows from Table \ref{tab_lattices5} that $S(\sigma_5) = H_5$ and $\mirror{S(\sigma_5)} = H_5\oplus E_8^2$.

\medskip

\item $(W,\SL_W)$:

One element of $\SL_W\setminus J_W$ is $\tau = (0,\frac45,\frac15,0)$, which also has equivalent forms $(0,\frac35,0,\frac25)$ and $(0,0,\frac25,\frac35)$.  Thus the symplectic action of $\widetilde{\SL}_W$ on $X_W$ fixes $\set{y=z=0}$, $\set{y=w=0}$, and $\set{z=w=0}$.  This amounts to four isolated fixed points, which yield $4A_4$ singularities on the quotient $X_W/\widetilde{\SL}_W$ that resolve to give 16 exceptional curves on $X_{W,\SL_W}$.  All of these exceptional curves are invariant under the action of $\sigma_5$ on $X_{W,\SL_W}$, as is the image of the exceptional curve from $X_W$. Furthermore, $\sigma_5$ fixes (the images of) $y=0$, $z=0$ and $w=0$. By Riemann-Hurwitz, one calculates the genus of these curves on $X_{W,\SL_W}$ to be $0$, and there are no fixed curves of higher genus.  Again from table \ref{tab_lattices3}, the only possible configuration of fixed points and curves has three rational curves and 13 fixed points, giving us $(g,n,k)=(0,13,2)$ and so $(r,a)=(18,1)$. This gives $S(\sigma_5) = H_5\oplus E_8^2$ and $\mirror{S(\sigma_5)} = H_5$. 
\begin{rem}
In computing $k$ recall that the fixed curve $C$ from \eqref{eq_fixed_locus} may have genus 0, as happens in this case. There are indeed three fixed rational curves, but one of them is $C$, and hence $k=2$ (see the remark following Theorem \ref{summary_invariants}). 
\end{rem}

\end{itemize}

Next we make the same types of calculations for the BHCR mirrors, which correspond to No.\ 21a in Table \ref{tab_eq5}.  The subgroup lattice of $G_{W^T}$ is given in Figure \ref{group_lattice_3}.  (Note that this is essentially the result of applying the transpose operation to Figure \ref{group_lattice_2}.)
\begin{figure}[h]
\centering
\begin{tikzpicture}
[xscale = 1, yscale = 1]
    \draw (1,.2) --(1,.8) ;
    \draw (1,1.2) --(1,1.8) ;
    \draw (1,2.2) --(1,2.8) ;
    
    \node at (1,0) {$\triv$};
    \node at (1,1) {$J_{W^T}$};
    \node at (1,2) {$\SL_{W^T}$};
    \node at (1,3) {$G_{W^T}$};
    \draw [decorate,decoration={brace,amplitude=9pt},xshift=-12pt,yshift=0pt] (1,0) -- (1,3)node [black,midway,xshift=-25pt] { $250$};
    \node [right] at (1.2,.5){5};
    \node [right] at (1.2, 1.5){5};
    \node [right] at (1.2,2.5){10};
\end{tikzpicture}
\caption{Subgroup lattice for $G_{W^T}$ when $W^T=x^2w+y^5+z^5+w^5$}
\label{group_lattice_3}
\end{figure}
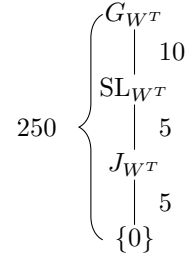
The groups $J_{W^T} \subset G\subset \SL_{W^T}$ to consider are $G=J_{W^T}$ and $G=\SL_{W^T}$.  Below we denote the non-symplectic automorphism on $X_{W^T,G}$ by $\sigma_5^T=(0,\frac15,0,0)$.

\begin{itemize}

\item $(W^T,J_{W^T})$:

The hypersurface $Y_{W^T}$ has one $A_1$ singularity, which lies on the curve $y=0$. Its resolution gives an exceptional curve on $X_{W^T}$, and this exceptional curve is left invariant by $\sigma_5^T$. The only fixed curve of $\sigma_5^T$ is $y=0$, which has genus $g=2$. As fixed curves must be disjoint, the exceptional curve is not fixed, but must contain one other fixed point. Thus $(g,n,k)=(2,1,0)$. Therefore $(r,a)=(2,1)$ and $S(\sigma_5^T) = H_5$, $\mirror{S(\sigma_5^T)} = H_5\oplus E_8^2$.

\medskip

\item $(W^T,\SL_{W^T})$:

An element of $\SL_{W^T}\setminus J_{W^T}$ is $(0,\frac15,\frac45,0)$.  It fixes exactly four points on $X_{W^T}$, and hence the quotient $X_{W^T}/\widetilde{\SL}_{W^T}$ has $4A_4$ singularities, whose resolution yields 16 exceptional curves on $X_{W^T,\SL_{W^T}}$.  All of these are invariant under the action of $\sigma_5^T$, as is the exceptional curve coming from $X_{W^T}$.  The fixed curve $y=0$ has genus $g=0$ and the fixed curve $z=0$ has genus 0, as well. From the equivalence $\sigma_5=(\frac 45,0,0,\frac 25)$ we get a fixed point corresponding to $x=w=0$. Finally, the 16 exceptional curves contribute one fixed curve and 12 fixed points, giving us in total $(g,n,k)=(0,13,2)$. Hence we obtain $(r,a)=(18,1)$ and $S(\sigma_5^T)=H_5\oplus E_8^2$, $\mirror{S(\sigma_5^T)} = H_5$.
\begin{rem}
Again here we have the number of fixed rational curves equal to $k+1$ (see the Remark following Theorem \ref{summary_invariants}).
\end{rem}

\end{itemize}

Having completed these calculations, the theorem for this case is verified by comparison.  Looking at the calculations for the BHCR mirror pair $X_{W}$, $X_{W^T,\SL_{W^T}}$, one sees that $\mirror{S(\sigma_5)} = S(\sigma_5^T)$; thus the lattice polarized K3 surfaces $(X_{W},S(\sigma_5))$ and $(X_{W^T,\SL_{W^T}},S(\sigma_5^T))$ are LPK3 mirrors.  A similar statement holds for the BHCR mirror pair $X_{W,\SL_{W}}$, $X_{W^T}$.

\end{ex}

\begin{ex}\label{ex3}
The data in Table 1 is insufficient for verifying Theorem \ref{main_thm} for entry No.\ 3a, where
\[
W = x^3+y^3+z^6+w^6
\]
and $\sigma_3 = (\frac13,0,0,0)$.  We include here the necessary further details. One may show that $\SL_W/J_W \simeq (\Z/3\Z)^2$, so that there are four distinct subgroups $J_W\subset G\subset \SL_W$ for which $|G/J_W|=3$. Using the same order as in Table \ref{tab_eq3}, these four subgroups may be described as $G_i = \inn{g_i}$, $i=1,\ldots,4$, where
\[
g_1 = \left(\frac23,\frac13,0,0\right), \qquad g_2 = \left(\frac13,\frac13,\frac13,0\right), \qquad g_3 = \left(\frac23,0,\frac13,0\right), \qquad g_4 = \left(0,\frac23,\frac13,0\right)
\]
Since $W=W^T$, we have $G_W=G_{W^T}$.  By (\ref{dualG_def}) and the calculation $g_1 A_W g_2^T \in\Z$, we find $G_1^T=G_2$ and $G_2^T=G_1$.  On the other hand, from the calculations $g_3 A_W g_3^T \in \Z$ and $g_4 A_W g_4^T \in \Z$, we conclude that $G_3^T=G_3$ and $G_4^T=G_4$.

Using the methods described the previous examples, one calculates for the surface $X_{W,G}$ that the invariants $(r,a)$ of $S(\sigma_3)$ are those given in the following table:

\begin{center}
\begin{tabular}{c|c|c}
$G$ & $r$ & $a$ \\
\hline
$G_1$ & 14 &4 \\
$G_2$ &  6& 4\\
$G_3$ & 10 & 4 \\
$G_4$ & 10 & 4 \\
\end{tabular}
\end{center}

\noindent In each case, we see that if $(r,a)$ are the invariants of $S(\sigma_3)$ for $X_{W,G}$, then the corresponding invariants for $X_{W,G^T}$ are $(20-r,a)$. This proves the theorem when $G=G_i,i=1\ldots 4$.

\end{ex}

\section{Tables}\label{tables}
This section contains tables listing all $p$-cyclic K3 surfaces for $p=3,5,7,13$.  We compute the invariants $(r,a)$ of the fixed lattice $S(\sigma_p)$ for each surface using the methods illustrated in Section \ref{ex_sec}. 

Numbering of the weight systems follows the numbering in \citep{yonemura}.

\begin{alphafootnotes}


\begin{longtable}{@{}lllcccc@{}}\toprule
\multicolumn{7}{c}{Table for $p=3$} \\ 

No.    &Weights	&Polynomial	&$\left|\SLgp{W}/J_{W}\right|$	&$\left|G/J_{W}\right|$	&$(r,a)$	&BHCR Dual	\\ \midrule \endfirsthead
\toprule
No.    &Weights	&Polynomial	&$\left|\SLgp{W}/J_{W}\right|$	&$\left|G/J_{W}\right|$	&$(r,a)$	&BHCR Dual	\\ \midrule \endhead

2a	&(4,3,3,2;12)	&$x^3+y^3z+yz^3+w^6$	&1	&1	&(10,4)	&2a\\
2b	&(4,3,3,2;12)	&$x^3+y^4+z^4+w^6$	&2	&2	&(10,4)	&2b	\\*
	&	&	&	&1	&(10,4)	&	\\
2c	&(4,3,3,2;12)	&$x^3+y^3z+z^4+w^6$	&3	&3	&(18,2)	&3c	\\*
	&	&	&	&1	&(10,4)	&	\\
3a\footnote{There are four subgroups of order 3. We do not differentiate them here, other than listing $(r,a)$. See Example \ref{ex3} in Section \ref{ex_sec} for disambiguation.}
&(2,2,1,1;6)	&$x^3+y^3+z^6+w^6$	&9	&9	&(18,2)	&3a	\\*
	&	&	&	&3	&(14,4)	&	\\*
	&	&	&	&3	&(6,4)	&	\\*
	&	&	&	&3	&(10,4)	&	\\*
	&	&	&	&3	&(10,4)	&	\\*
	&	&	&	&1	&(2,2)	&	\\
3b	&(2,2,1,1;6)	&$x^3+y^3+z^5w+w^6$	&3	&3	&(14,4)	&17a	\\*
	&	&	&	&1	&(2,2)	&	\\
3c	&(2,2,1,1;6)	&$x^3+y^3+yz^4+w^6$	&3	&3	&(10,4)	&2c	\\*
	&	&	&	&1	&(2,2)	&	\\
3d	&(2,2,1,1;6)	&$x^3+y^3+z^5w+zw^5$	&6	&6	&(18,2)	&3d	\\*
	&	&	&	&3	&(14,4)	&	\\*
	&	&	&	&2	&(6,4)	&	\\*
	&	&	&	&1	&(2,2)	&	\\
3e	&(2,2,1,1;6)	&$x^3+y^3+yz^4+zw^5$	&2	&2	&(6,4)	&15a	\\*
	&	&	&	&1	&(2,2)	&	\\
4a	&(4,4,3,1;12)	&$x^3+y^3+z^4+w^{12}$	&3	&3	&(16,3)	&4a	\\*
	&	&	&	&1	&(4,3)	&	\\
4b	&(4,4,3,1;12)	&$x^3+y^3+z^4+zw^9$	&3	&3	&(16,3)	&18a	\\*
	&	&	&	&1	&(4,3)	&	\\
4c	&(4,4,3,1;12)	&$x^3+y^3+z^4+yw^8$	&1	&1	&(4,3)	&16b	\\*
10a	&(6,4,1,1;12)	&$x^2+y^3+z^{11}w+w^{12}$	&1	&1	&(2,0)	&46\\
10b	&(6,4,1,1;12)	&$x^2+xw^6+y^3+z^{11}w$	&1	&1	&(2,0)	&65	\\
10c	&(6,4,1,1;12)	&$x^2+y^3+z^{11}w+zw^{11}$	&5	&5	&(18,0)	&10c	\\*
	&	&	&	&1	&(2,0)	&	\\
10d	&(6,4,1,1;12)	&$x^2+y^3+z^{12}+w^{12}$	&6	&6	&(18,0)	&10d	\\*
	&	&	&	&3	&(14,2)	&	\\*
	&	&	&	&2	&(6,2)	&	\\*
	&	&	&	&1	&(2,0)	&	\\
10e	&(6,4,1,1;12)	&$x^2+y^3+xz^6+w^{12}$	&3	&3	&(14,2)	&24b	\\*
	&	&	&	&1	&(2,0)	&	\\
11a	&(15,10,3,2;30)	&$x^2+y^3+z^{10}+w^{15}$	&1	&1	&(10,2)	&11a\\
11b	&(15,10,3,2;30)	&$x^2+y^3+xz^5+w^{15}$	&1	&1	&(10,2)	&22b\\
12a	&(9,6,2,1;18)	&$x^2+y^3+z^9+zw^{16}$	&1	&1	&(4,1)	&48a\\
12b	&(9,6,2,1;18)	&$x^2+y^3+z^9+xw^9$	&3	&3	&(16,1)	&25d	\\*
	&	&	&	&1	&(4,1)	&	\\
12c	&(9,6,2,1;18)	&$x^2+y^3+z^9+w^{18}$	&3	&3	&(16,1)	&12c	\\*
	&	&	&	&1	&(4,1)	&	\\
13a	&(12,8,3,1;24)	&$x^2+y^3+xz^4+w^{24}$	&1	&1	&(8,1)	&20a\\
13b	&(12,8,3,1;24)	&$x^2+y^3+xz^4+zw^{21}$	&1	&1	&(8,1)	&59a\\
13c	&(12,8,3,1;24)	&$x^2+y^3+z^8+xw^{12}$	&1	&1	&(8,1)	&27	\\
13d	&(12,8,3,1;24)	&$x^2+y^3+z^8+w^{24}$	&2	&2	&(12,1)	&13d\\*
	&	&	&	&1	&(8,1)	&	\\
13e	&(12,8,3,1;24)	&$x^2+y^3+z^8+zw^{21}$	&1	&1	&(8,1)	&49	\\
14a	&(21,14,6,1;42)	&$x^2+y^3+z^7+w^{42}$	&1	&1	&(10,0)	&14a\\
14b	&(21,14,6,1;42)	&$x^2+y^3+z^7+xw^{21}$	&1	&1	&(10,0)	&28b\\
14c	&(21,14,6,1;42)	&$x^2+y^3+z^7+zw^{36}$	&1	&1	&(10,0)	&51b\\
15a	&(5,4,3,3;15)	&$x^3+y^3z+z^4w+w^5$	&2	&2	&(18,2)	&3e	\\*
	&	&	&	&1	&(14,4)	&	\\
15b	&(5,4,3,3;15)	&$x^3+y^3z+z^5+w^5$	&1	&1	&(14,4)	&17b	\\
16a	&(8,7,6,3;24)	&$x^3+y^3w+z^4+zw^6$	&1	&1	&(16,3)	&18b\\
16b	&(8,7,6,3;24)	&$x^3+y^3w+z^4+w^8$	&1	&1	&(16,3)	&4c	\\
17a	&(5,5,3,2;15)	&$x^3+y^3+z^5+zw^6$	&3	&3	&(18,2)	&3b	\\*
	&	&	&	&1	&(6,4)	&	\\
17b	&(5,5,3,2;15)	&$x^3+y^3+z^5+yw^5$	&1	&1	&(6,4)	&15b\\
18a	&(3,3,2,1;9)	&$x^3+y^3+z^4w+w^9$	&3	&3	&(16,3)	&4b	\\*
	&	&	&	&1	&(4,3)	&	\\
18b	&(3,3,2,1;9)	&$x^3+y^3+z^4w+yw^6$	&1	&1	&(4,3)	&16a\\
18c	&(3,3,2,1;9)	&$x^3+y^3+xz^3+zw^7$	&1	&1	&(4,3)	&54	\\
18d	&(3,3,2,1;9)	&$x^3+y^3+z^4w+zw^7$	&3	&3	&(16,3)	&18d\\*
	&	&	&	&1	&(4,3)	&	\\
18e	&(3,3,2,1;9)	&$x^3+y^3+z^3x+w^9$	&3	&3	&(16,3)	&18e\\*
	&	&	&	&1	&(4,3)	&	\\
20a	&(9,8,6,1;24)	&$x^2z+y^3+z^4+w^{24}$	&1	&1	&(12,1)	&13a	\\
20b	&(9,8,6,1;24)	&$x^2z+y^3+z^4+xw^{15}$	&1	&1	&(12,1)	&72a	\\
22a	&(6,5,3,1;15)	&$x^2z+y^3+xz^3+w^{15}$	&1	&1	&(10,2)	&22a	\\
22b	&(6,5,3,1;15)	&$x^2z+y^3+z^5+w^{15}$	&1	&1	&(10,2)	&11b	\\
22c	&(6,5,3,1;15)	&$x^2z+y^3+z^5+xw^9$	&2	&2	&(16,1)	&25c	\\*
	&	&	&	&1	&(10,2)	&	\\
24a	&(5,4,2,1;12)	&$x^2z+y^3+z^6+xw^7$	&1	&1	&(6,2)	&67	\\
24b	&(5,4,2,1;12)	&$x^2z+y^3+z^6+w^{12}$	&3	&3	&(18,0)	&10e\\*
	&	&	&	&1	&(6,2)	&	\\
25a	&(4,3,1,1;9)	&$x^2w+y^3+xz^5+zw^8$	&1	&1	&(4,1)	&88	\\
25b	&(4,3,1,1;9)	&$x^2z+y^3+z^8w+w^9$	&1	&1	&(4,1)	&48b\\
25c	&(4,3,1,1;9)	&$x^2w+y^3+xz^5+w^9$	&2	&2	&(10,2)	&22c\\*
	&	&	&	&1	&(4,1)	&	\\
25d	&(4,3,1,1;9)	&$x^2w+y^3+z^9+w^9$	&3	&3	&(16,1)	&12b	\\*
	&	&	&	&1	&(4,1)	&	\\
25e	&(4,3,1,1;9)	&$x^2w+y^3+z^9+xw^5$	&3	&3	&(16,1)	&25e\\*
	&	&	&	&1	&(4,1)	&	\\
27	&(11,8,3,2;24)	&$x^2w+y^3+z^8+w^{12}$	&1	&1	&(12,1)	&13c	\\
28a	&(10,7,3,1;21)	&$x^2w+y^3+z^7+xw^{11}$	&1	&1	&(10,0)	&28a	\\
28b	&(10,7,3,1;21)	&$x^2w+y^3+z^7+w^{21}$	&1	&1	&(10,0)	&14b	\\
28c	&(10,7,3,1;21)	&$x^2w+y^3+z^7+zw^{18}$	&1	&1	&(10,0)	&51c	\\
46	&(33,22,6,5;66)	&$x^2+y^3+z^{11}+zw^{12}$	&1	&1	&(18,0)	&10a\\
48a	&(24,16,5,3;48)	&$x^2+y^3+z^9w+w^{16}$	&1	&1	&(16,1)	&12a	\\
48b	&(24,16,5,3;48)	&$x^2+y^3+z^9w+xw^8$	&1	&1	&(16,1)	&25b	\\
49	&(21,14,5,2;42)	&$x^2+y^3+z^8w+w^{21}$	&1	&1	&(12,1)	&13e	\\
51a	&(18,12,5,1;36)	&$x^2+y^3+z^7w+zw^{31}$	&1	&1	&(10,0)	&51a	\\
51b	&(18,12,5,1;36)	&$x^2+y^3+z^7w+w^{36}$	&1	&1	&(10,0)	&14c	\\
51c	&(18,12,5,1;36)	&$x^2+y^3+z^7w+xw^{18}$	&1	&1	&(10,0)	&28c	\\
54	&(7,6,5,3;21)	&$x^3+y^3w+yz^3+w^7$	&1	&1	&(16,3)	&18c	\\
59a	&(8,7,5,1;21)	&$x^2z+y^3+z^4w+w^{21}$	&1	&1	&(12,1)	&13b	\\
59b	&(8,7,5,1;21)	&$x^2z+y^3+z^4w+xw^{13}$	&1	&1	&(12,1)	&72b	\\
65	&(14,11,5,3;33)	&$x^2z+y^3+z^6w+w^{11}$	&1	&1	&(18,0)	&10b	\\
67	&(9,7,3,2;21)	&$x^2z+y^3+z^7+xw^6$	&1	&1	&(14,2)	&24a	\\
72a	&(7,5,2,1;15)	&$x^2w+y^3+xz^4+w^{15}$	&1	&1	&(8,1)	&20b	\\
72b	&(7,5,2,1;15)	&$x^2w+y^3+xz^4+zw^{13}$	&1	&1	&(8,1)	&59b	\\
88	&(11,9,5,2;27)	&$x^2z+y^3+z^5w+xw^8$	&1	&1	&(16,1)	&25a	\\
\bottomrule

\caption[Table for $p=3$]{Calculations for $p=3$}\label{tab_eq3}
\end{longtable}

\begin{table}[h]
\centering

\begin{tabular}{@{}lllcccc@{}}\toprule

\multicolumn{7}{c}{Table for $p=5$} \\

No.    &Weights    &Polynomial	&$\left|\SLgp{W}/J_{W}\right|$	&$\left|G/J_{W}\right|$	&$(r,a)$	&BHCR Dual	\\ \midrule
6a	&(5,2,2,1;10)	&$x^2+y^5+z^5+w^{10}$	&5	&5	&(18,1)	&6a	\\
	&	&	&	&1	&(2,1)	&	\\
6b	&(5,2,2,1;10)	&$x^2+y^5+z^5+zw^8$	&1	&1	&(2,1)	&30a	\\
6c	&(5,2,2,1;10)	&$x^2+y^5+z^5+xw^5$	&5	&5	&(18,1)	&21a	\\
	&	&	&	&1	&(2,1)	&	\\
9a	&(10,5,4,1;20)	&$x^2+y^4+z^5+w^{20}$	&2	&2	&(10,1)	&9a	\\
	&	&	&	&1	&(10,1)	&	\\
9b	&(10,5,4,1;20)	&$x^2+y^4+z^5+yw^{15}$	&1	&1	&(10,1)	&34	\\
9c	&(10,5,4,1;20)	&$x^2+y^4+z^5+xw^{10}$	&1	&1	&(10,1)	&26	\\
9d	&(10,5,4,1;20)	&$x^2+xy^2+z^5+w^{20}$	&1	&1	&(10,1)	&9d	\\
9e	&(10,5,4,1;20)	&$x^2+xy^2+z^5+yw^{15}$	&1	&1	&(10,1)	&71a\\
15b	&(5,4,3,3;15)	&$x^3+y^3z+z^5+w^5$	&1	&1	&(6,2)	&17b	\\
17b	&(5,5,3,2;15)	&$x^3+y^3+z^5+yw^5$	&1	&1	&(14,2)	&15b	\\
17c	&(5,5,3,2;15)	&$x^2y+y^3+z^5+xw^5$	&2	&2	&(18,1)	&21d	\\
	&	&	&	&1	&(14,2)	&	\\
21a	&(2,1,1,1;5)	&$x^2w+y^5+z^5+w^5$	&5	&5	&(18,1)	&6c	\\
	&	&	&	&1	&(2,1)	&	\\
21b	&(2,1,1,1;5)	&$x^2y+y^4z+z^5+w^5$	&1	&1	&(2,1)	&30b\\
21c	&(2,1,1,1;5)	&$x^2z+y^5+z^4w+xw^3$	&1	&1	&(2,1)	&86	\\
21d	&(2,1,1,1;5)	&$x^2z+xy^3+z^5+w^5$	&2	&2	&(6,2)	&17c	\\
	&	&	&	&1	&(2,1)	&	\\
21e	&(2,1,1,1;5)	&$x^2w+y^5+z^5+xw^3$	&5	&5	&(18,1)	&21e	\\
	&	&	&	&1	&(2,1)	&	\\
26	&(9,5,4,2;20)	&$x^2w+y^4+z^5+w^{10}$	&1	&1	&(10,1)	&9c\\
30a	&(20,8,7,5;40)	&$x^2+y^5+z^5w+w^8$	&1	&1	&(18,1)	&6b	\\
30b	&(20,8,7,5;40)	&$x^2+y^5+z^5w+xw^4$	&1	&1	&(18,1)	&21b\\
34	&(15,7,6,2;30)	&$x^2+y^4w+z^5+w^{15}$	&1	&1	&(10,1)	&9b	\\
71a	&(4,7,3,1;15)	&$x^2y+y^2w+z^5+w^{15}$	&1	&1	&(10,1)	&9e	\\
71b	&(4,7,3,1;15)	&$x^2w+xy^2+z^5+yw^{11}$	&1	&1	&(10,1)	&71b\\
86	&(9,7,5,4;25)	&$x^2y+y^3w+z^5+xw^4$	&1	&1	&(18,1)	&21c\\
\bottomrule
\end{tabular}
\caption[Table for $p=5$]{Calculations for $p=5$}\label{tab_eq5}

\end{table}
\begin{table}
\centering

\begin{tabular}{@{}lllcccc@{}}\toprule
\multicolumn{7}{c}{Table for $p=7$} \\

No.    &Weights    &Polynomial	&$\left|\SLgp{W}/J_{W}\right|$	&$\left|G/J_{W}\right|$	&$(r,a)$	&BHCR Dual	\\ \midrule
14a	&(21,14,6,1;42)	&$x^2+y^3+z^7+w^{42}$	&1	&1	&(10,0)	&14a	\\
14b	&(21,14,6,1;42)	&$x^2+y^3+z^7+xw^{21}$	&1	&1	&(10,0)	&28b	\\
14d	&(21,14,6,1;42)	&$x^2+y^3+z^7+yw^{28}$	&1	&1	&(10,0)	&45b	\\
28a	&(10,7,3,1;21)	&$x^2w+y^3+z^7+xw^{11}$	&1	&1	&(10,0)	&28a	\\
28b	&(10,7,3,1;21)	&$x^2w+y^3+z^7+w^{21}$	&1	&1	&(10,0)	&14b	\\
28d	&(10,7,3,1;21)	&$x^2w+y^3+z^7+yw^{14}$	&1	&1	&(10,0)	&45c	\\
32	&(7,3,2,2;14)	&$x^2+y^4z+z^7+w^7$	&1	&1	&(4,1)	&35a	\\
35a	&(14,7,4,3;28)	&$x^2+y^4+z^7+yw^7$	&1	&1	&(16,1)	&32	\\
35b	&(14,7,4,3;28)	&$x^2+xy^2+z^7+yw^7$	&1	&1	&(16,1)	&66a\\
45a	&(14,9,4,1;28)	&$x^2+y^3w+z^7+yw^{19}$	&1	&1	&(10,0)	&45a\\
45b	&(14,9,4,1;28)	&$x^2+y^3w+z^7+w^{28}$	&1	&1	&(10,0)	&14d\\
45c	&(14,9,4,1;28)	&$x^2+y^3w+z^7+xw^{14}$	&1	&1	&(10,0)	&28d\\
66a	&(3,2,1,1;7)	&$x^2z+xy^2+z^7+w^7$	&1	&1	&(4,1)	&35b\\
66b	&(3,2,1,1;7)	&$x^2w+xy^2+z^7+yw^5$	&3	&3	&(16,1)	&66b\\
	&	&	&	&1	&(4,1)	&\\ \bottomrule
\end{tabular}
\caption[Table for $p=7$]{Calculations for $p=7$}\label{tab_eq7}

\end{table}


\begin{table}
\centering
\begin{tabular}{@{}lllcccc@{}}\toprule

\multicolumn{7}{c}{Table for $p=13$} \\

No.    &Weights    &Polynomial	&$\left|\SLgp{W}/J_{W}\right|$	&$\left|G/J_{W}\right|$	&$(r,a)$	&BHCR Dual	\\ \midrule
87	&(5,4,3,1;13)	& $x^2z+xy^2+yz^3+w^{13}$	&1	&1	&(10,1)	&87\\
\bottomrule
\end{tabular}
\caption[Table for $p=13$]{Calculations for $p=13$}\label{tab_eq13}

\end{table}

\end{alphafootnotes}

\clearpage

\appendix
\appendix
\section{Tables of lattices}\label{tables_lattices}

In Tables \ref{tab_lattices3} -- \ref{tab_lattices13}, we reproduce tables in \citep{order_three,Taki_order3} and \citep{other_primes} that classify possibilities for the following situation.  Given a K3 surface with a non-symplectic automorphism $\sigma$ of prime order $p\in\set{3,5,7,13}$, each table describes all possible values for the invariants $(r,a)$ of the fixed lattice $S(\sigma)$, the invariants $(g,n,k)$ of the fixed locus $X^\sigma$, the isometry class of the orthogonal complement $T(\sigma)$ of $S(\sigma)$, and the isometry class of $S(\sigma)$.

We also include diagrams, discussed at the end of Section \ref{sec_LPK3}, of the possible values of the invariants $(m,a)$ of $S(\sigma)$ for the primes $p=3,5,7$.  (Recall that $m=(22-r)/(p-1)$ is the $\Z[\zeta_p]$-rank of $T(\sigma_p)$.) Table \ref{tab_lattices3} is a modification, using \citep[Corollary 1.13.3]{nikulin_ISBF}, of the tables found in \citep{order_three} and \citep{Taki_order3} that makes the verification of mirror symmetry more straightforward. For instance, we have used in places the isometry $U(3)\oplus E_6\cong U\oplus A_2^3$.

\begin{table}[h!]
    \centering
\begin{tabular}{l|l|l|l|l|l|l}
$r$&$a$&$g$&$n$&$k$& $T(\sigma)$& $S(\sigma)$\\
\hline
2 &2 &4 &0 &0 & $U\oplus U\oplus A_2\oplus E_6\oplus E_8$  &$U(3)$\\
2 &0 &5 &0 &1 & $U\oplus U\oplus E_8^2$  & $U$\\
4 &3 &3 &1 &0 & $U\oplus U\oplus A_2\oplus E_6^2$ & $U(3)\oplus A_2$\\
4 &1 &4 &1 &1 & $U\oplus U\oplus E_6\oplus E_8$ & $U\oplus A_2$\\
6 &4 &2 &2 &0 & $U\oplus U\oplus A_2^3\oplus E_6$ & $U(3)\oplus A_2^2$\\
6 &2 &3 &2 &1 & $U\oplus U\oplus E_6^2$ & $U\oplus A_2^2$\\
8 &7 &- &3 &- & $U\oplus U(3)\oplus A_2^5$ & $U(3)\oplus E_6^*(3)$\\
8 &5 &1 &3 &0 & $U\oplus U\oplus A_2^5$ & $U(3)\oplus A_2^3$\\
8 &3 &2 &3 &1 & $U\oplus U\oplus A_2^2\oplus E_6$ & $U\oplus A_2^3$\\
8 &1 &3 &3 &2 & $U\oplus U\oplus A_2\oplus E_8$ & $U\oplus E_6$\\
10 &6 &0 &4 &0 & $U\oplus U(3)\oplus A_2^4$ & $U(3)\oplus A_2^4$\\
10 &4 &1 &4 &1 & $U\oplus U\oplus A_2^4$ & $U\oplus A_2^4$\\
10 &2 &2 &4 &2 & $U\oplus U\oplus A_2\oplus E_6$ & $U\oplus A_2\oplus E_6$\\
10 &0 &3 &4 &3 & $U\oplus U\oplus E_8$ & $U\oplus E_8$\\
12 &5 &0 &5 &1 & $U\oplus U(3)\oplus A_2^3$ & $U\oplus A_2^5$\\
12 &3 &1 &5 &2 & $U\oplus U\oplus A_2^3$ & $U\oplus A_2^2\oplus E_6$\\
12 &1 &2 &5 &3 & $U\oplus U\oplus E_6$ & $U\oplus A_2\oplus E_8$\\
14 &4 &0 &6 &2 & $U\oplus U(3)\oplus A_2^2$ & $U\oplus A_2^3\oplus E_6$\\
14 &2 &1 &6 &3 & $U\oplus U\oplus A_2^2$ & $U\oplus E_6^2$\\
16 &3 &0 &7 &3 & $U\oplus U(3)\oplus A_2$ & $U\oplus A_2\oplus E_6^2$\\
16 &1 &1 &7 &4 & $U\oplus U\oplus A_2$ & $U\oplus E_6\oplus E_8$\\
18 &2 &0 &8 &4 & $U\oplus U(3)$ & $U\oplus A_2\oplus E_6\oplus E_8$\\
18 &0 &1 &8 &5 & $U\oplus U$ & $U\oplus E_8^2$\\
20 &1 &0 &9 &5 & $A_2(-1)$ & $U\oplus E_8^2\oplus A_2$\\
\end{tabular}
\caption{Order $3$}
\label{tab_lattices3}
\end{table}

 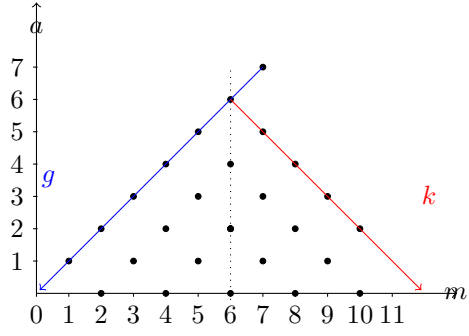
\begin{figure}[h]
\begin{tikzpicture}[scale=.43]
\filldraw [black] 
(1,1) circle (2.5pt) 
(2,0) circle (2.5pt)
(2,2) circle (2.5pt) 
(3,1) circle (2.5pt)
(3,3) circle (2.5pt)
(4,0) circle (2.5pt)
(4,2) circle (2.5pt)
(4,4) circle (2.5pt)
(5,1) circle (2.5pt)
(5,3) circle (2.5pt)
(5,5) circle (2.5pt)
(6,0) circle (2.5pt)
(6,2) circle (2.5pt)
(6,4) circle (2.5pt)
(6,2) circle (2.5pt)
(6,6) circle (2.5pt)
(7,1) circle (2.5pt)
(7,3) circle (2.5pt)
(7,5) circle (2.5pt)
(7,7) circle (2.5pt)
(8,0) circle (2.5pt)
(8,2) circle (2.5pt)
(8,4) circle (2.5pt)
(9,1) circle (2.5pt)
(9,3) circle (2.5pt)
(10,0) circle (2.5pt)
(10,2) circle (2.5pt)
 ; 
\draw[dotted] (6,-0.2)--coordinate (x axis mid) (6,7);
\draw[->] (0,0) -- coordinate (x axis mid) (13,0);
    \draw[->] (0,0) -- coordinate (y axis mid)(0,9);
    \foreach \x in {0,1,2,3,4,5,6,7,8,9,10,11}
        \draw [xshift=0cm](\x cm,0pt) -- (\x cm,-3pt)
         node[anchor=north] {$\x$};
          \foreach \y in {1,2,3,4,5,6,7}
        \draw (1pt,\y cm) -- (-3pt,\y cm) node[anchor=east] {$\y$};
    \node[below=0.2cm, right=2.5cm] at (x axis mid) {$m$};
    \node[left=0.2cm, below=-1.8cm] at (y axis mid) {$a$};
    \draw[<-, blue](0.1,0.1)-- node[below=1cm,left=1.15cm]{$g$} (7,7);   
 \draw[<-, red](11.9,0.1)-- node[below=1cm,right=1.15cm]{$k$} (6,6);
   \end{tikzpicture} 
\caption{Order $p=3$}
\end{figure}

\begin{table}[h!]
    \centering
\begin{tabular}{l|l|l|l|l|l|l}
$r$&$a$&$g$&$n$&$k$&$T(\sigma)$&$S(\sigma)$\\
\hline
2&1&2&1&0&$U\oplus H_5\oplus E_8^2$&$H_5$\\
6&2&1&4&0&$U\oplus H_5\oplus A_4\oplus E_8$&$H_5\oplus A_4$\\
6&4&-&4&-&$U(5)\oplus H_5\oplus A_4\oplus E_8$&$H_5\oplus A_4^*(5)$\\
10&1&1&7&1&$U\oplus H_5\oplus E_8$&$H_5\oplus E_8$\\
10&3&0&7&0&$U\oplus H_5 \oplus A_4^2$&$H_5\oplus A_4^2$\\
14&2&0&10&1&$U\oplus H_5\oplus A_4$&$H_5\oplus A_4\oplus E_8$\\
18&1&0&13&2&$U\oplus H_5$&$H_5\oplus E_8^2$\\
\end{tabular}
\caption{Order $5$}
\label{tab_lattices5}
\end{table}

\begin{table}[h!]
    \centering
\begin{tabular}{l|l|l|l|l|l|l}
$r$&$a$&$g$&$n$&$k$&$T(\sigma)$&$S(\sigma)$\\
\hline
4&1&1&3&0&$U\oplus U\oplus A_6\oplus E_8$&$U\oplus K_7$\\
4&3&-&3&-&$U\oplus U(7)\oplus A_6\oplus E_8$&$U(7)\oplus K_7$\\
10&0&1&8&1&$U\oplus U\oplus E_8$&$U\oplus E_8$\\
10&2&0&8&0&$U\oplus U(7)\oplus E_8$&$U(7)\oplus E_8$\\
16&1&0&13&1&$U\oplus U\oplus K_7$&$U\oplus A_6\oplus E_8$\\
\end{tabular}
\caption{Order $7$}
\label{tab_lattices7}
\end{table}

\begin{figure}[h!]
\begin{floatrow}
\ffigbox[\FBwidth]
{
\begin{tikzpicture}[scale=.51]
\filldraw [black] 
(1,1) circle (2.5pt)
(2,2) circle (2.5pt)
(3,1) circle (2.5pt)
(3,3) circle (2.5pt)
(4,2) circle (2.5pt)
(4,4) circle (2.5pt)
(5,1) circle (2.5pt)
 ; 
\draw [dotted] (3,-0.2) -- coordinate (x axis mid) (3,4);
\draw[->] (0,0) -- coordinate (x axis mid) (7,0);
    \draw[->] (0,0) -- coordinate (y axis mid)(0,5.5);
    \foreach \x in {0,1,2,3,4,5}
        \draw [xshift=0cm](\x cm,0pt) -- (\x cm,-3pt)
         node[anchor=north] {$\x$};
          \foreach \y in {1,2,3,4}
        \draw (1pt,\y cm) -- (-3pt,\y cm) node[anchor=east] {$\y$};
    \node[below=0.2cm, right=1cm] at (x axis mid) {$m$};
    \node[left=0.3cm, below=-1.2cm] at (y axis mid) {$a$};
 \draw[<-, blue](0.1,0.1)-- node[below=0.5cm,left=0.5cm]{$g$} (4,4);   
 \draw[<-, red](5.9,0.1)-- node[below=0.3cm,right=0.5cm]{$k$} (3,3);
  \end{tikzpicture}
}
{\captionsetup{margin={-10pt,-10pt},font=small}\caption{Order $p=5$}\label{fig:1}}%
\hspace{1.5cm}%
\ffigbox[\FBwidth]
{\begin{tikzpicture}[scale=.51]
\filldraw [black] 

(1,1) circle (2.5pt)  
(2,0) circle (2.5pt) 
(2,2) circle (2.5pt)
(3,1) circle (2.5pt)
(3,3) circle (2.5pt)
 ; 
\draw[dotted] (2,-0.2)--coordinate (x axis mid) (2,3);
\draw[->] (0,0) -- coordinate (x axis mid) (5,0);
    \draw[->] (0,0) -- coordinate (y axis mid)(0,4.5);
    \foreach \x in {0,1,2,3}
        \draw [xshift=0cm](\x cm,0pt) -- (\x cm,-3pt)
         node[anchor=north] {$\x$};
          \foreach \y in {1,2,3}
        \draw (1pt,\y cm) -- (-3pt,\y cm) node[anchor=east] {$\y$};
    \node[below=0.2cm, right=0.7cm] at (x axis mid) {$m$};
    \node[left=0.3cm, below=-1.0cm] at (y axis mid) {$a$};
 \draw[<-, blue](0.1,0.1)-- node[below=0.45cm,left=0.4cm]{$g$} (3,3);   
 \draw[<-, red](3.9,0.1)-- node[below=0.3cm,right=0.4cm]{$k$} (2,2);
  \end{tikzpicture} }
{\captionsetup{margin={-10pt,-10pt}, font=small}\caption{Order $p=7$}\label{fig:2}}
\end{floatrow}
\end{figure}

\begin{table}[h!]
	\centering
\begin{tabular}{l|l|l|l|l|l|l}
$r$&$a$&$g$&$n$&$k$&$T(\sigma)$&$S(\sigma)$\\
\hline
10&1&0&9&0&$U\oplus E_8\oplus H_{13}$&$E_8\oplus H_{13}$\\
\end{tabular}
\caption{Order $13$}
\label{tab_lattices13}
\end{table}

\clearpage
\bibliographystyle{abbrvnat}
\bibliography{references}

\begin{thebibliography}{21}
\providecommand{\natexlab}[1]{#1}
\providecommand{\url}[1]{\texttt{#1}}
\expandafter\ifx\csname urlstyle\endcsname\relax
  \providecommand{\doi}[1]{doi: #1}\else
  \providecommand{\doi}{doi: \begingroup \urlstyle{rm}\Url}\fi

\bibitem[Artebani and Sarti(2008)]{order_three}
M.~Artebani and A.~Sarti.
\newblock Non-symplectic automorphisms of order 3 on {$K3$} surfaces.
\newblock \emph{Math. Ann.}, 342\penalty0 (4):\penalty0 903--921, 2008.

\bibitem[Artebani et~al.(2011)Artebani, Sarti, and Taki]{other_primes}
M.~Artebani, A.~Sarti, and S.~Taki.
\newblock {$K3$} surfaces with non-symplectic automorphisms of prime order.
\newblock \emph{Math. Z.}, 268\penalty0 (1-2):\penalty0 507--533, 2011.
\newblock With an appendix by Shigeyuki Kond{\=o}.

\bibitem[Artebani et~al.(2012)Artebani, Boissi\`ere, and Sarti]{involutions}
M.~Artebani, S.~Boissi\`ere, and A.~Sarti.
\newblock The {B}erglund-{H}\"ubsch-{C}hiodo-{R}uan mirror symmetry for {$K3$}
  surfaces.
\newblock arXiv:1108.2780v2, Jul. 2012.

\bibitem[Belcastro(2002)]{belcastro}
S.-M. Belcastro.
\newblock Picard lattices of families of {$K3$} surfaces.
\newblock \emph{Commun. Algebra}, 30\penalty0 (1):\penalty0 61--82, 2002.

\bibitem[Berglund and Henningson(1995)]{berghenn}
P.~Berglund and M.~Henningson.
\newblock Landau-{G}inzburg orbifolds, mirror symmetry and the elliptic genus.
\newblock \emph{Nucl. Phys. B}, 433\penalty0 (2):\penalty0 311--332, 1995.

\bibitem[Berglund and H{\"u}bsch(1993)]{berghub}
P.~Berglund and T.~H{\"u}bsch.
\newblock A generalized construction of mirror manifolds.
\newblock \emph{Nucl. Phys. B}, 393\penalty0 (1-2):\penalty0 377--391, 1993.

\bibitem[Chiodo and Ruan(2011)]{BHCR}
A.~Chiodo and Y.~Ruan.
\newblock L{G}/{CY} correspondence: the state space isomorphism.
\newblock \emph{Adv. Math.}, 227\penalty0 (6):\penalty0 2157--2188, 2011.

\bibitem[Corti and Golyshev(2011)]{hypergeometric}
A.~Corti and V.~Golyshev.
\newblock Hypergeometric equations and weighted projective spaces.
\newblock \emph{Sci. China Math.}, 54\penalty0 (8):\penalty0 1577--1590, 2011.

\bibitem[Dolgachev(1996)]{dolgachev}
I.~V. Dolgachev.
\newblock Mirror symmetry for lattice polarized {$K3$} surfaces.
\newblock \emph{J. Math. Sci.}, 81\penalty0 (3):\penalty0 2599--2630, 1996.
\newblock Algebraic geometry, 4.

\bibitem[Ebeling(2006)]{ebeling}
W.~Ebeling.
\newblock Mirror symmetry, {K}obayashi's duality, and {S}aito's duality.
\newblock \emph{Kodai Math. J.}, 29\penalty0 (3):\penalty0 319--336, 2006.

\bibitem[Iano-Fletcher(2000)]{fletcher}
A.~R. Iano-Fletcher.
\newblock Working with weighted complete intersections.
\newblock In \emph{Explicit birational geometry of 3-folds}, volume 281 of
  \emph{Lond. Math. Soc. Lect. Note Ser.}, pages 101--173. Cambridge Univ.
  Press, Cambridge, 2000.

\bibitem[{Krawitz}(2010)]{krawitz}
M.~{Krawitz}.
\newblock \emph{{FJRW rings and Landau-Ginzburg Mirror Symmetry}}.
\newblock PhD thesis, University of Michigan, 2010.
\newblock URL
  \url{http://deepblue.lib.umich.edu/bitstream/2027.42/77910/1/mkrawitz_1.pdf}.

\bibitem[Kreuzer and Skarke(1992)]{KrSk}
M.~Kreuzer and H.~Skarke.
\newblock On the classification of quasihomogeneous functions.
\newblock \emph{Commun. Math. Phys.}, 150\penalty0 (1):\penalty0 137--147,
  1992.

\bibitem[Nikulin(1979{\natexlab{a}})]{nikulin_ISBF}
V.~V. Nikulin.
\newblock Integer symmetric bilinear forms and some of their geometric
  applications.
\newblock \emph{Izv. Akad. Nauk SSSR Ser. Mat.}, 43\penalty0 (1):\penalty0
  111--177, 238, 1979{\natexlab{a}}.

\bibitem[Nikulin(1979{\natexlab{b}})]{nikulin_finite}
V.~V. Nikulin.
\newblock Finite groups of automorphisms of {K}\"ahlerian {$K3$} surfaces.
\newblock \emph{Trud. Moskov. Mat. Obs.}, 38:\penalty0 75--137,
  1979{\natexlab{b}}.

\bibitem[Nikulin(1981)]{nikulin_order2}
V.~V. Nikulin.
\newblock Quotient-groups of groups of automorphisms of hyperbolic forms by
  subgroups generated by {$2$}-reflections. {A}lgebro-geometric applications.
\newblock In \emph{Current problems in mathematics, {V}ol. 18}, pages 3--114.
  Akad. Nauk SSSR, Vsesoyuz. Inst. Nauchn. i Tekhn. Informatsii, Moscow, 1981.

\bibitem[Rudakov and Shafarevich(1981)]{rudakov}
A.~N. Rudakov and I.~R. Shafarevich.
\newblock Surfaces of type {$K3$} over fields of finite characteristic.
\newblock In \emph{Current problems in mathematics, {V}ol. 18}, pages 115--207.
  Akad. Nauk SSSR, Vsesoyuz. Inst. Nauchn. i Tekhn. Informatsii, Moscow, 1981.

\bibitem[Skarke(1996)]{skarke}
H.~Skarke.
\newblock Weight systems for toric {C}alabi-{Y}au varieties and reflexivity of
  {N}ewton polyhedra.
\newblock \emph{Mod. Phys. Lett. A}, 11\penalty0 (20):\penalty0 1637--1652,
  1996.

\bibitem[Taki(2011)]{Taki_order3}
S.~Taki.
\newblock Classification of non-symplectic automorphisms of order 3 on {$K3$}
  surfaces.
\newblock \emph{Math. Nachr.}, 284\penalty0 (1):\penalty0 124--135, 2011.

\bibitem[Xiao(1996)]{gang}
G.~Xiao.
\newblock Galois covers between {$K3$} surfaces.
\newblock \emph{Ann. Inst. Fourier (Grenoble)}, 46\penalty0 (1):\penalty0
  73--88, 1996.

\bibitem[Yonemura(1990)]{yonemura}
T.~Yonemura.
\newblock Hypersurface simple {K3} singularities.
\newblock \emph{Tohoku Math. J. (2)}, 42\penalty0 (3):\penalty0 351--380, 1990.

\end{thebibliography}

\end{document}